\newtheorem{Thm}[equation]{Theorem}
\newtheorem{Lem}[equation]{Lemma}
\begin{document}
\baselineskip=12pt

\title{On detecting the trivial rational $3$-tangle}
\author{Bo-hyun Kwon}
\date{Monday, March 13,  2023}
\maketitle
\begin{abstract} 
An important issue in classifying the rational $3$-tangle is how to know whether or not the given tangle is the trivial rational 3-tangle called $\infty$-tangle. The author\cite{1} provided a certain algorithm to detect the $\infty$-tangle. In this paper, we give a much simpler method to detect the $\infty$-tangle by using the $\textit{bridge arc replacement}$. We hope that this method can help  prove many application problems such as a classification of $3$-bridge knots.
\end{abstract}

\section{Introduction}

A classification of rational $2$-tangle is provided by  Conway~\cite{0} in 1970. It is a perfect classification of them since he had shown that each rational $2$-tangle is presented by the corresponding rational number. The author~\cite{1} gave an algorithm to classify the rational $3$-tangles. Unfortunately, the algorithm is not enough to classify the rational $3$-tangle completely since it is only used to distinguish  given two rational $3$-tangles not assigning them to a certain natural group. The main part of the algorithm is to show whether or not the modified rational $3$-tangle from the given two rational $3$-tangles is the trivial rational $3$-tangle called  the $\infty$-tangle. There is a basic way to detect $\infty$-tangle by the fundamental group argument, but the algorithm is much better than the traditional way. In this paper, we provide a new method to detect $\infty$-tangle.

\section{The trivial rational $3$-tangle, $\epsilon$}

 \begin{figure}[htb]
\includegraphics[scale=.35]{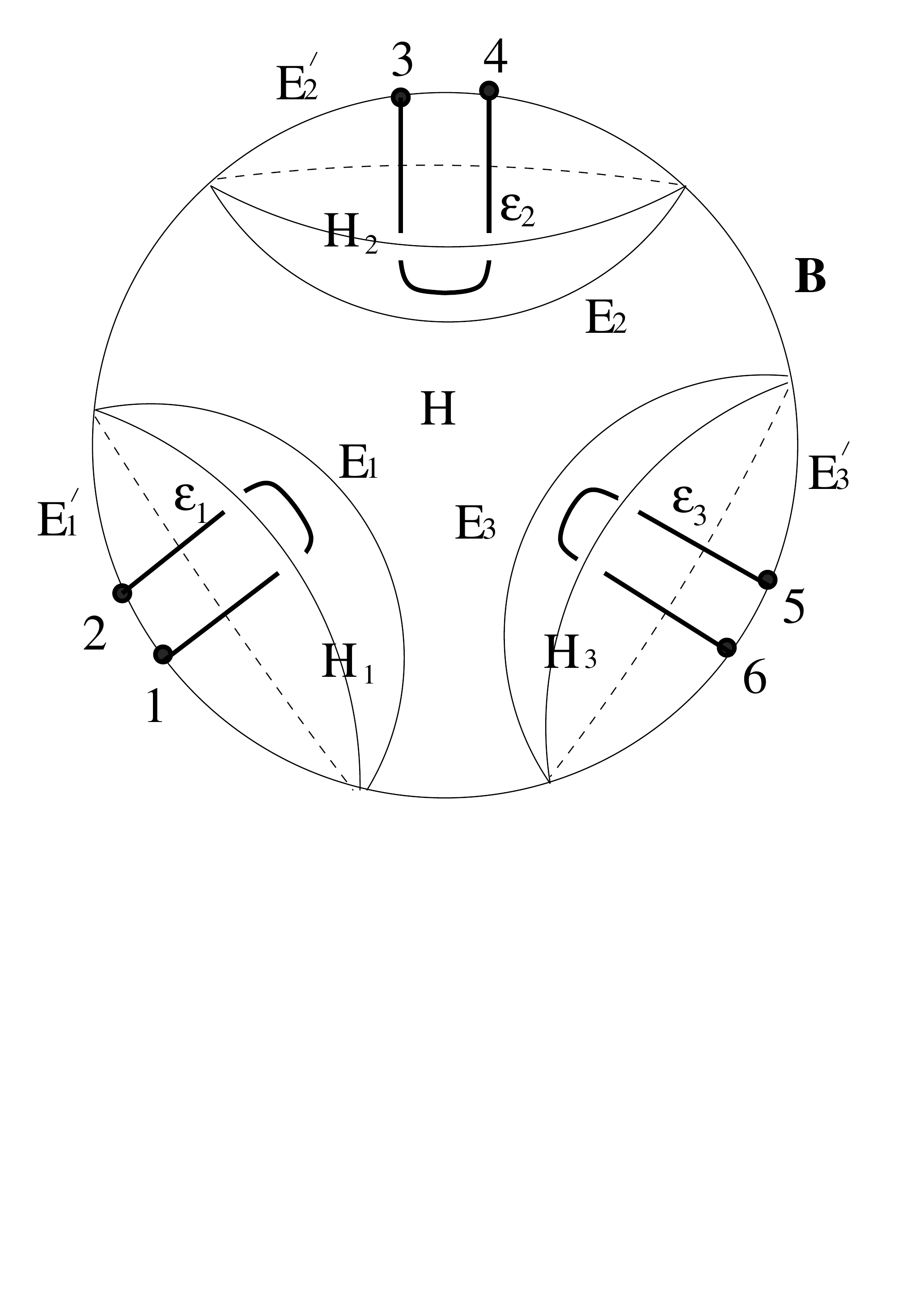}
\vskip -100pt
\caption{ Three compressing disks in $\infty$-tangle, $(B,\epsilon)$}
\label{F1}
\end{figure}
Consider an ordered pair $T=(B,\epsilon=\epsilon_1\cup\epsilon_2\cup\epsilon_3)$ as in Figure~\ref{F1}, where $B$ is the unit $3$-ball.
This trivial rational $3$-tangle  in  $B$ is called an $\infty$ tangle  or $\epsilon$.
 Let $E_1$,$E_2$ and $E_3$ be the three  pairwise disjoint, non-parallel compressing disks in $B-\epsilon$ as in Figure~\ref{F1}.
Then $E_1$,$E_2$ and $E_3$ separate $B$ into four components. Let $H_i$  be the component which contains $\epsilon_i$ and $H=cl(B-(H_1\cup H_2\cup H_3))$, where $cl(*)$ means that the closure of $*$.
Let $E_i'$ be the 2-punctured disk in $\Sigma_{0,6}$ so that $\partial E_i'=\partial E_i$ and $E_i\cup E_i'$ bounds the ball $H_i$ in $B$.
Let  $E=E_1\cup E_2\cup E_3$,  $E'=E_1'\cup E_2'\cup E_3'$ and $\partial E=\partial E'=\partial E_1\cup \partial E_2\cup \partial E_3$. Let $\Sigma_{0,6}=\partial B-\epsilon$ and $P=cl(\Sigma_{0,6}-(E'))$ which is a pair of pants.

\section{Bridge arc replacement}

 Let $\tau=\tau_1\cup\tau_2\cup\tau_3$ be a rational $3$-tangle in $B$, where $\tau_i$ be a properly embedded arc in $B$.  A  disk $D$ in $B$ is called a \textit{bridge disk} if $D^\circ\cap \tau=\emptyset$ and $\partial D=\tau_i\cup\beta$ and $\tau_i\cap \beta=\partial \tau_i$ for some $i\in\{1,2,3\}$, where $\beta$ is a simple arc in $S^2$ so that $\beta^\circ\cap \tau=\emptyset$. The simple arc $\beta$ in $\Sigma_{0,6}$ is called a \textit{bridge arc} if  it cobounds a bridge disk with a component of $\tau$. We note that there is a collection of  disjoint three bridge arcs for a rational $3$-tangle. However, we also note that the collection is not unique (up to isotopy)  for the same rational $3$-tangle. Actually, there are infinitely many different bridge arcs for the same string of the rational $3$-tangle. We say that the simple closed curve $\gamma$ is \textit{obtained from} the bridge arc $\beta$ if the boundary of a small regular neighborhood of $\beta$ in $\Sigma_{0,6}$ is isotopic to $\gamma$.
 
\begin{Lem}\label{L1}
Let $\beta_1$ and $\beta_2$ be two disjoint  bridge arcs on $\Sigma_{0,6}$ of $(B,\tau)$, where $\tau$ is a rational $3$-tangle in $B$. Let $\beta_3$ be an arc connecting the two remaining punctures on $\Sigma_{0,6}$ which is disjoint with $\beta_1\cup\beta_2$. Then $\gamma_3$ is also a bridge arc on  $\Sigma_{0,6}$ of $(B,\tau)$.
\end{Lem}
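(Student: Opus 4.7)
The plan is to use the bridge disks for $\beta_1$ and $\beta_2$ to cut $(B,\tau)$ into three pieces, and then exhibit a bridge disk for the third strand cobounded with $\beta_3$. The rationality of $\tau$ enters only to force the middle piece to be a trivial $1$-string tangle in a ball; once this is in hand, the conclusion follows from a short surface-topology argument on the resulting boundary $2$-sphere.

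After relabeling, I may assume $\beta_1,\beta_2$ are bridge arcs for $\tau_1,\tau_2$ respectively. Fix bridge disks $D_1,D_2$ realizing $\beta_1,\beta_2$ and push them slightly into $B$ to obtain disjoint compressing disks $E_1,E_2$ of $B-N(\tau)$ whose boundaries are the curves $\gamma_1,\gamma_2$ obtained from $\beta_1,\beta_2$ on $\Sigma_{0,6}$. Each $E_i$ is separating and, together with the $2$-punctured disk $E_i'\subset\Sigma_{0,6}$ bounded by $\gamma_i$, cobounds a $3$-ball $H_i\subset B$ containing $\tau_i$, in direct analogy with the decomposition of $(B,\epsilon)$ in Section~2. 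Set $H=cl(B-(H_1\cup H_2))$. Because each $H_i$ meets $\partial B$ only in the disk $E_i'$, the space $H$ is again a $3$-ball; its boundary $\partial H$ is the $2$-sphere obtained from the annulus $\partial B-E_1'-E_2'$ by capping off $\gamma_1,\gamma_2$ with the disks $E_1,E_2$, and the remaining strand $\tau_3$ is a properly embedded arc in $H$ meeting $\partial H$ at two distinguished points.

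Next I would argue $(H,\tau_3)$ is a trivial $1$-string tangle. Since $\tau$ is rational, $B-N(\tau)$ is a genus-$3$ handlebody. Each $\tau_i$ ($i=1,2$) is boundary parallel in $H_i$ via $D_i$, so $H_i-N(\tau_i)$ is a solid torus, and $E_i$ is a separating compressing disk peeling it off. Consequently $H-N(\tau_3)=(B-N(\tau))-(H_1-N(\tau_1))-(H_2-N(\tau_2))$ is a genus-$1$ handlebody, i.e., a solid torus. A properly embedded arc in a ball whose exterior is a solid torus must be boundary parallel, so $\tau_3$ is boundary parallel in $H$. Choose an embedded disk $A\subset H$ realizing the parallelism, so $\partial A=\tau_3\cup\tau_3'$ for an arc $\tau_3'\subset\partial H$ joining the two distinguished points.

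Finally, because $\beta_3$ is disjoint from $\beta_1\cup\beta_2$ (and, taking the $\gamma_i$ in thin enough regular neighborhoods of $\beta_i$, also from $E_1'\cup E_2'$), it lies in $\partial B-E_1'-E_2'\subset\partial H$ and joins the same two distinguished points as $\tau_3'$. On the $2$-sphere $\partial H$ with two distinguished points, any two embedded arcs between these points are isotopic rel endpoints: their union bounds two disks on $S^2$, each furnishing an explicit isotopy. Hence $\beta_3$ and $\tau_3'$ cobound a disk $D'\subset\partial H$. Gluing $D'$ to $A$ along $\tau_3'$ produces an embedded disk $D=D'\cup A$ in $H$ with $\partial D=\tau_3\cup\beta_3$ and $D^\circ\cap\tau_3=\emptyset$. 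Since $H$ is disjoint from $\tau_1\cup\tau_2$, the disk $D$ is a bridge disk for $\tau_3$ in $(B,\tau)$, witnessing that $\beta_3$ is a bridge arc. The main obstacle is the middle step: establishing that $H-N(\tau_3)$ is a solid torus genuinely uses that $\tau$ is rational, not merely that $B-\tau$ is a handlebody.
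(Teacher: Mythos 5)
Your overall strategy is sound and, at its core, is the same decomposition the paper uses: cut $(B,\tau)$ along the two bridge disks into $H_1$, $H_2$ and a complementary ball $H$ containing $\tau_3$, show the remaining $1$-string tangle $(H,\tau_3)$ is trivial, and then slide the parallelism arc to $\beta_3$ on the sphere $\partial H$. Where you differ is in the justifications: you establish triviality of $(H,\tau_3)$ by a genus count (cutting the genus-$3$ handlebody $B-N(\tau)$ along the separating disks $E_1,E_2$ leaves a solid torus) together with the standard fact that an arc in a ball with solid-torus exterior is boundary parallel, whereas the paper argues by successively cutting open along the bridge disks to pass from a rational $3$-tangle to a rational $2$-tangle to a rational $1$-tangle and then regluing. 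Your version is arguably more precise about \emph{why} the residual tangle is trivial (the paper simply asserts that the cut-open tangles are rational), at the cost of invoking Dehn's-lemma-level input for the solid-torus characterization of trivial arcs.

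There is, however, one genuine gap: you never justify that the two bridge disks $D_1$ and $D_2$ can be chosen \emph{disjoint}. Disjointness of the arcs $\beta_1,\beta_2$ on $\Sigma_{0,6}$ does not prevent the interiors of arbitrarily chosen bridge disks from intersecting, and ``pushing them slightly into $B$'' does nothing to remove such intersections; yet your entire construction of the disjoint balls $H_1,H_2$ and of $H$ depends on it. This is precisely the claim the paper spends the first half of its proof on (it cuts along $D_1$ first and only then finds $D_2$ in the cut-open tangle, so that disjointness is automatic after regluing). The gap is fillable: since $\partial D_1=\tau_1\cup\beta_1$ is disjoint from $D_2$ (as $D_2^\circ\cap\tau=\emptyset$ and $D_2\cap\partial B=\beta_2$), the intersection $D_1\cap D_2$ consists only of circles, which can be removed one at a time by surgering $D_1$ along an innermost subdisk of $D_2$; each surgery yields a new embedded bridge disk for $\beta_1$ with fewer intersection circles. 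You should also note, as a minor point, that your claim that $\beta_3\cup\tau_3'$ ``bounds two disks on $S^2$'' presupposes the two arcs meet only at their endpoints; in general one first removes interior intersections by an innermost-bigon isotopy on the sphere before capping off.
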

\begin{proof}
We first claim that there are two disjoint bridge disks so that each of them contains the bridge arcs $\beta_1$ and $\beta_2$ respectively. We take a bridge disk  bounded by the union of $\beta_1$ and a string of $\tau$. Then cut $B$ along the bridge disk and open it along the two cut disks to have a rational $2$-tangle. Since $\beta_2$ is disjoint with $\beta_1$, we can take a bridge disk bounded by $\beta_2$ and a string of the obtained rational $2$-tangle. We glue the two cut disks back to have the original rational $3$-tangle. This implies the existence of two disjoint bridge disks satisfying the condition. Moreover, from the rational $2$-tangle, if we cut the tangle along the bridge disk bounded by $\beta_2$ and a string then we have a rational $1$-tangle. It is clear that any arc connecting the two endpoint of the $1$-tangle on $\Sigma_{0,2}$ without intersecting the opened two disks along the bridge disks cobounds a bridge disk in the $3$-ball containing the rational $1$-tangle. By gluing the two pairs of bridge disks back, we can guarantee the lemma above.
\end{proof}

 Let $\{\beta_1,\beta_2,\beta_3\}$ be a collection of  disjoint three bridge arcs on $\Sigma_{0,6}$ for $T=(B,\tau)$. Now, we consider the subarcs of $\beta:=\beta_1\cup\beta_2\cup\beta_3$ in $P$.
 We assume that there is no bigon so that one of the sides is a subarc of $\beta$ in $P$. If there is a such bigon then we can isotope the subarc to vanish the bigon. We say that $\beta$ is \textit{
dense} in $P$ if  none of two adjacent endpoints of the component of $\beta$ in $\partial P$ belongs to the same $\beta_i$ for $i=1,2,3$. We assume that there are at least two successive adjacent  endpoints of the components of $\beta$ in $\partial P$ contained in the same $\beta_i$ to not satisfy the definition of dense as in Figure~\ref{F6}. We note that two of the successive endpoints connect the puncture directly without passing through the other successive endpoints. Let $c_1$ and $c_2$ be the two subarc of $\beta_i$ explained above. We define the two directions $(+)$ and $(-)$ as in the first diagram of Figure~\ref{F6}.  
If the two punctures appear when the two subarcs go to the opposite direction  we take the new  arc instead of $\beta_i$ as in the second diagram of Figure~\ref{F6}. By Lemma~\ref{L1}, the new arc is also a bridge arc. Moreover, it also clear that the newly selected bridge arc is disjoint with $\beta-\beta_i$.  Similarly, if the two punctures appear when they go to  the same direction as in the third diagram, we take the new bridge arc which is disjoint with the remaining two bridge arcs as in the fourth diagram of Figure~\ref{F6}.
 This procedure is called the \textit{bridge arc replacement}, briefly $\mathbf{BR}$. 
 \begin{figure}[htb]
 \includegraphics[scale=.7]{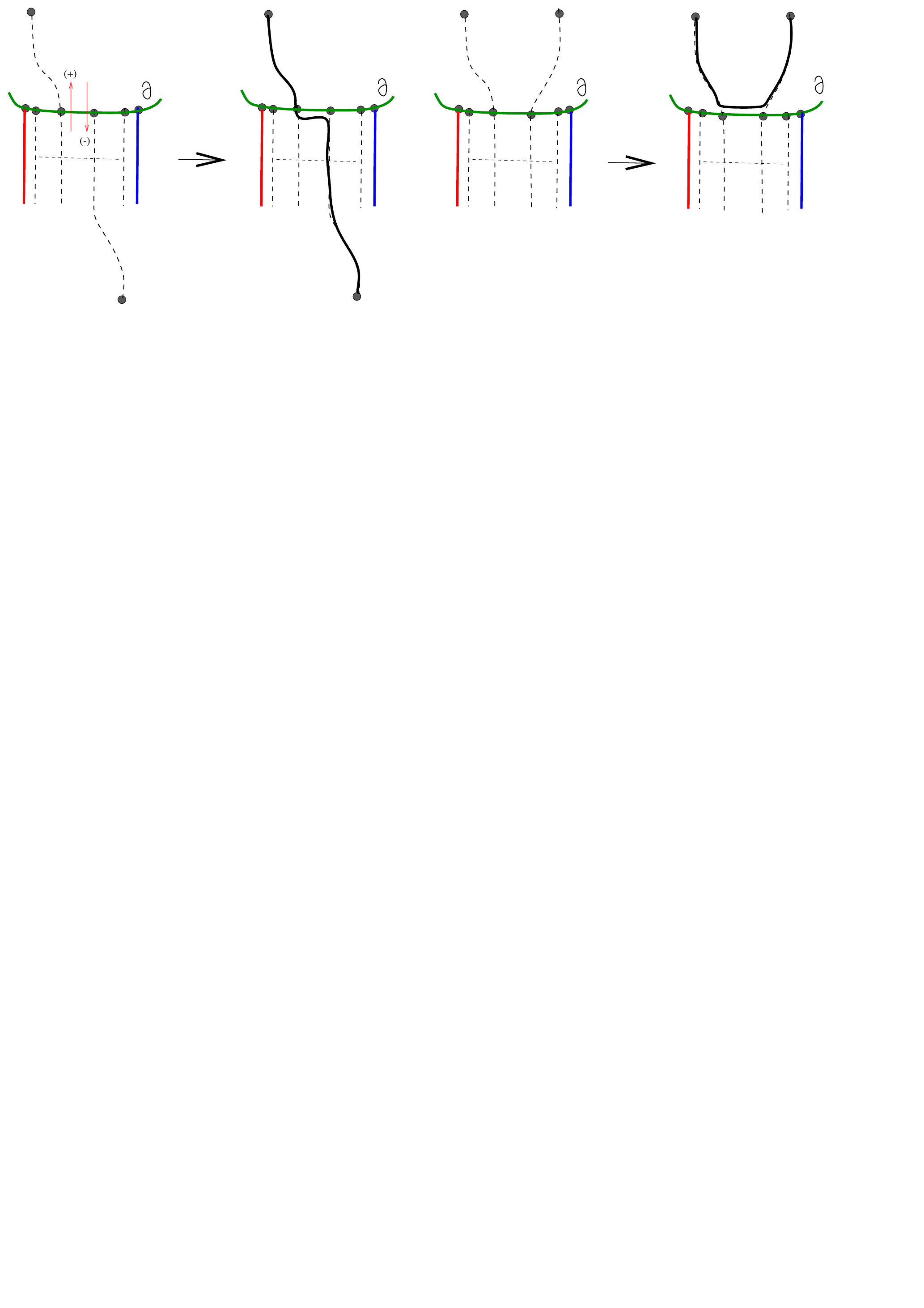} \vskip -450pt
  \caption{Bridge arc replacement, $
  \mathbf{BR}$}\label{F6}
 
 \end{figure}
 
 \begin{Lem}
 Let $\{\beta_1,\beta_2,\beta_3\}$ be a collection of disjoint three bridge arcs on $\Sigma_{0,6}$ of $T=(B,\tau)$. Then there exist a collection of disjoint three bridge arcs $\{\beta_1',\beta_2',\beta_3'\}$ on $\Sigma_{0,6}$ of $T$ which is dense in $P$.
 \end{Lem}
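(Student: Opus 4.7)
The plan is to proceed by induction on a complexity that counts how many times the three bridge arcs cross $\partial P = \partial E_1 \cup \partial E_2 \cup \partial E_3$, and to show that each application of the bridge arc replacement $\mathbf{BR}$ strictly decreases this complexity while preserving the conclusion that we still have a disjoint triple of bridge arcs.

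First I would set up the complexity. Given a collection $\{\beta_1,\beta_2,\beta_3\}$ of disjoint bridge arcs on $\Sigma_{0,6}$, isotope each $\beta_i$ so that it meets $\partial E'$ transversely and minimally, and eliminate any bigon between a subarc of $\beta$ and an arc of $\partial P$ in $P$ (the statement already assumes this). Define
\[
c(\beta_1,\beta_2,\beta_3) \;=\; |\beta \cap \partial E'|,
\]
the total number of intersection points of $\beta = \beta_1 \cup \beta_2 \cup \beta_3$ with $\partial P$. This is a nonnegative integer, and $c = 0$ exactly when each $\beta_i$ lies entirely in one component of $P$ away from $\partial P$, in which case density is automatic (the endpoints in $\partial P$ do not cluster on one arc). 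For the general case, what matters is that if the collection fails to be dense, two consecutive endpoints of subarcs of $\beta$ along one boundary circle of $P$ belong to the same $\beta_i$.

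Next, I would do the inductive step. Suppose $\{\beta_1,\beta_2,\beta_3\}$ is not dense, so some boundary circle $\partial E_j'$ of $P$ carries two successive endpoints of subarcs $c_1, c_2 \subset \beta_i$ that connect directly to the puncture without other endpoints in between. By the two cases in Figure~\ref{F6}, I would apply $\mathbf{BR}$ to $\beta_i$ to obtain a new arc $\beta_i'$; in either the same-direction case or the opposite-direction case, the new arc is constructed so as to take a shortcut through (a neighborhood of) the puncture, bypassing the subarcs $c_1, c_2$. By Lemma~\ref{L1}, $\beta_i'$ is again a bridge arc, and by construction it is disjoint from $\beta_{i+1} \cup \beta_{i+2}$ (indices mod $3$). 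I would then verify, by inspecting the two local pictures, that the replacement eliminates at least the two intersection points that the bypassed portion of $\beta_i$ had with $\partial E_j'$, while creating no new intersections with $\partial E'$ (after a final bigon reduction, if any bigon has appeared). Thus
\[
c(\beta_1',\beta_2',\beta_3') \;<\; c(\beta_1,\beta_2,\beta_3),
\]
where $\beta_k' = \beta_k$ for $k \neq i$.

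Finally, since $c$ is a nonnegative integer and strictly decreases with each $\mathbf{BR}$, the process terminates after finitely many steps in a collection $\{\beta_1',\beta_2',\beta_3'\}$ of disjoint bridge arcs to which $\mathbf{BR}$ no longer applies; by the definition of $\mathbf{BR}$ this collection is dense in $P$. The main obstacle I anticipate is the bookkeeping in the second half of the inductive step: one must check case-by-case (same-direction vs.\ opposite-direction configurations of $c_1, c_2$, and also which of the three boundary circles of $P$ the successive endpoints lie on) that the shortcut indeed reduces $|\beta_i \cap \partial E'|$ and does not accidentally create new intersections with $\partial E_k'$ for $k \neq j$ that would offset the gain. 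Once this local verification is established, everything else is a clean induction built on Lemma~\ref{L1}.
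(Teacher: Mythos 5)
Your proposal is correct and follows essentially the same route as the paper: the paper's proof is a one-line induction observing that each application of $\mathbf{BR}$ reduces the (finite) number of endpoints of $\beta\cap P$ on $\partial P$, which is exactly your complexity $c=|\beta\cap\partial E'|$, while Lemma~\ref{L1} guarantees the replacement is again a disjoint triple of bridge arcs. Your write-up simply makes explicit the local verification that the shortcut strictly decreases the count, which the paper asserts without elaboration.
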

 \begin{proof}
By repeating the procedure $\mathbf{BR}$, we would get the collection which is dense in $P$ since $\mathbf{BR}$ reduces the number of endpoints if the collection is not dense and the number of endpoints are finite.
 \end{proof}
 We claim that the procedure \textbf{BR} can detect the $\infty$-tangle. The following theorem would cover two of the three possible cases.
 \begin{figure}[htb]
\includegraphics[scale=.8]{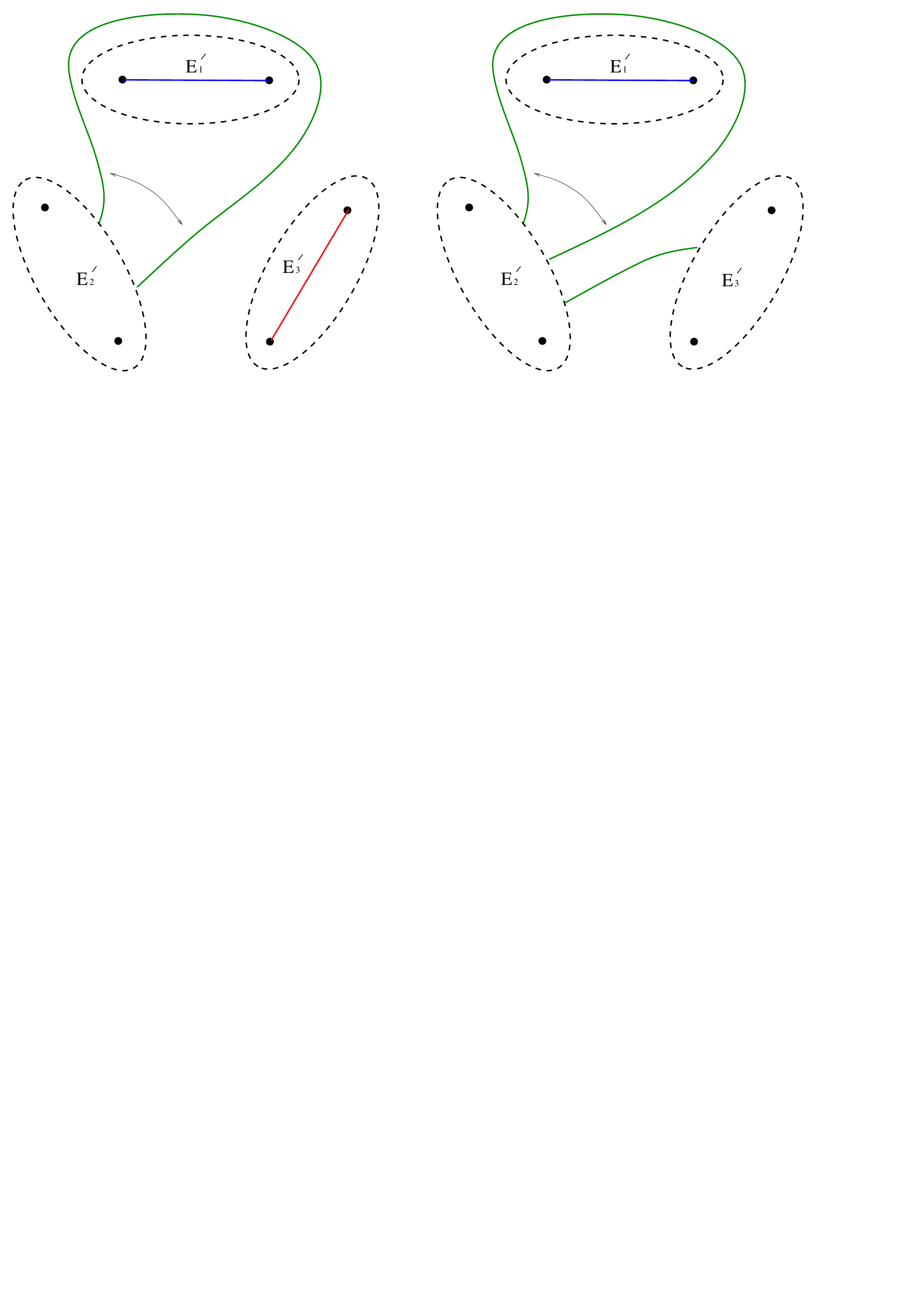}
\vskip -470pt
\caption{A collection of three disjoint simple arcs which is dense in $P$.}\label{F7}
\end{figure}
\begin{Thm}\label{T11}
Let $\mathcal{B}=\{\beta_1,\beta_2,\beta_3\}$ be a collection of disjoint three bridge arcs on $\Sigma_{0,6}$ of $T=(B,\epsilon)$ which is dense in $P$. Then $\mathcal{B}$ is unique up to isotopy. Especially, the three simple closed curves obtained from $\{\beta_1,\beta_2,\beta_3\}$ respectively are $\partial E_1,\partial E_2,\partial E_3$ up to isotopy.
\end{Thm}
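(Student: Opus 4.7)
The plan is to establish the theorem in two parts. For the ``especially'' part, I will show that any dense collection of disjoint bridge arcs for the $\infty$-tangle can be isotoped on $\Sigma_{0,6}$ so that $\beta=\beta_1\cup\beta_2\cup\beta_3$ is disjoint from $\partial E=\partial E_1\cup\partial E_2\cup\partial E_3$; once this is achieved, each $\beta_k$ is confined to a single twice-punctured component $E'_{\sigma(k)}$ of $\Sigma_{0,6}\setminus\partial E$ (since the pair of pants $P$ contains no punctures), connecting its two punctures, and the boundary of a regular neighborhood of $\beta_k$ in $\Sigma_{0,6}$ is then a simple closed curve enclosing exactly the two punctures of $E'_{\sigma(k)}$, hence isotopic to $\partial E_{\sigma(k)}$. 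The bijectivity of $\sigma$ is automatic from the bridge-disk structure: by Lemma~\ref{L1} the bridge disks can be chosen pairwise disjoint, and the six endpoints of the $\beta_k$'s together exhaust the six punctures of $\Sigma_{0,6}$ (two per $E'_j$), so the correspondence between bridge arcs and strands is a bijection of $\{1,2,3\}$. Uniqueness of $\mathcal{B}$ up to isotopy then follows because an essential arc in a twice-punctured disk connecting its two punctures is unique up to isotopy rel endpoints and boundary, by a standard bigon-reduction argument.

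To carry out the isotopy achieving $\beta\cap\partial E=\emptyset$, I would take pairwise disjoint bridge disks $D_1,D_2,D_3$ from Lemma~\ref{L1}, put them in general position with respect to $E_1\cup E_2\cup E_3$, and minimize $|D\cap E|$ by surgeries. Closed-curve components of $D_k\cap E_i$ are killed by innermost-disk surgeries in $E_i$. For arc components, an outermost arc $\alpha$ of $D_k\cap E$ in $D_k$ whose complementary subarc $\gamma\subset\partial D_k$ lies entirely in $\beta_k$ cuts off a subdisk $\Delta'\subset D_k$ which, together with the corresponding outermost subdisk $\Delta''\subset E_i$ cut off by $\alpha$ (taken outermost in $E_i$ among all of $D_1\cup D_2\cup D_3$), allows an ambient isotopy of $\beta_k$ on $\Sigma_{0,6}$ replacing the subarc $\gamma$ by an arc pushed slightly off $\partial E_i$, thereby removing two points from $\beta_k\cap\partial E_i$. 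Since the $D_{k'}$ for $k'\neq k$ are disjoint from $\Delta''$, the isotopy keeps $\beta_k$ disjoint from the other $\beta_{k'}$.

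The main obstacle is to show that this minimization actually terminates at $\beta\cap\partial E=\emptyset$, rather than stalling at a configuration in which every outermost arc's complementary subarc passes through the strand $\epsilon_{\sigma(k)}$ rather than staying in $\beta_k$ (the ``strand-side obstruction''). Here the density hypothesis is essential. Combining density---no two cyclically adjacent endpoints on the same $\partial E_i$ carry the same $\beta$-label---with the parity observation that $|\beta_k\cap\partial E_i|$ is even for every $k$ and every $i$ (since each $\beta_k$ has both endpoints inside $E'_{\sigma(k)}$, so its crossings of any $\partial E_i$ come in balanced pairs), I would argue that any surviving intersection pattern must contain a $\beta$-side outermost arc: if instead every outermost arc were strand-side, two cyclically adjacent endpoints on some $\partial E_i$ would be forced to carry the same $\beta$-label, contradicting density. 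Making this combinatorial extraction precise---tracking the cyclic ordering of endpoints on each $\partial E_i$ together with how each $\beta_k$ threads the decomposition of $\Sigma_{0,6}$ into $P$ and the $E'_j$'s---is the heart of the argument.
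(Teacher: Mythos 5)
Your outline of the endgame is fine: once $\beta$ has been isotoped off $\partial E$, each $\beta_k$ is trapped in some $E'_{\sigma(k)}$ and the conclusion (including uniqueness) follows easily, and this matches what the paper does in its ``first two cases.'' The problem is that all of the difficulty is concentrated in the step you defer --- showing that the minimization of $|\beta\cap\partial E|$ actually terminates at zero --- and you explicitly leave ``the heart of the argument'' unproved. That step is precisely the content of the paper's Lemma~\ref{L2}, which occupies most of Section~4 and requires the standard-diagram machinery and the half Dehn twists $\delta_3$ and $\delta_1\delta_2^{-1}$ imported from \cite{1}. So as written the proposal does not contain a proof of the only nontrivial case.

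Moreover, the reduction mechanism itself is not set up correctly. First, the ``strand-side obstruction'' you single out cannot occur: since $E\cap\epsilon=\emptyset$, every arc of $D_k\cap E$ has both endpoints on $\beta_k$, and among the (at least two) outermost subdisks of $D_k$ at most one can have its complementary boundary subarc meeting the connected strand $\tau_k$; hence a $\beta$-side outermost disk $\Delta'$ always exists, with no appeal to density. The genuine obstruction is elsewhere. Writing $F\subset E_i$ for the outermost piece cut off by $\alpha$, with $\partial F=\alpha\cup\delta$ and $\delta\subset\partial E_i$, the disk $\Delta'\cup F$ shows only that the simple closed curve $\gamma\cup\delta\subset\Sigma_{0,6}$ bounds a disk in $B-\epsilon$. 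It need not bound a disk in $\Sigma_{0,6}$: it can be an essential curve separating two punctures from four, in which case replacing $\gamma$ by a push-off of $\delta$ changes the isotopy class of $\beta_k$ on $\Sigma_{0,6}$ and is not the ``ambient isotopy of $\beta_k$ on $\Sigma_{0,6}$'' you assert --- you would produce a \emph{different} collection of bridge arcs, which is useless for proving uniqueness of the original one. Showing that density forbids every such $\gamma\cup\delta$ from being essential is exactly an adjacency/wave argument of the strength of Lemma~\ref{L2}, and the parity-plus-density sketch in your last paragraph does not supply it.
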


\begin{proof}[Proof of the first two cases]
We note that if a simple closed curve obtained from $\beta_i$ is isotopic to $\partial E_j$  then $\beta_i$ is the straight arc connecting the two punctures in $E_j'$ up to isotopy.
We first assume that two of the simple closed curves obtained from $\{\beta_1,\beta_2,\beta_3\}$ are isotopic to $\partial E_i$ and $\partial E_j$ for $i\neq j$. Then we have the first diagram in Figure~\ref{F7}. It is clear that the remaining bridge arc is also isotopic to the straight arc connecting the two punctures in $E_k'$ if the collection is dense in $P$.
Secondly, we assume that one of them is isotopic to $\partial E_i$ for some $i$. Then we have the second diagram of Figure~\ref{F7}. We note that it is impossible to have a wave in terms of $\partial E$ if the collection is dense in $P$ in this case. This implies that there is no wave. So, every simple closed curves obtained from $\{\beta_1,\beta_2,\beta_3\}$ are isotopic to one of $\partial E_j$ for $j=1,2,3$. 
\end{proof}
For the last case, we assume that none of them is isotopic to $\partial E_i$ for some $i$. In order to prove the last case, we use the arguments when we made the algorithm to classify rational $3$-tangles.
\subsection{ Dehn's parametrization of a simple closed curve in $\Sigma_{0,6}$}
\hfill
\break

\begin{figure}[htb]
\includegraphics[scale=.7]{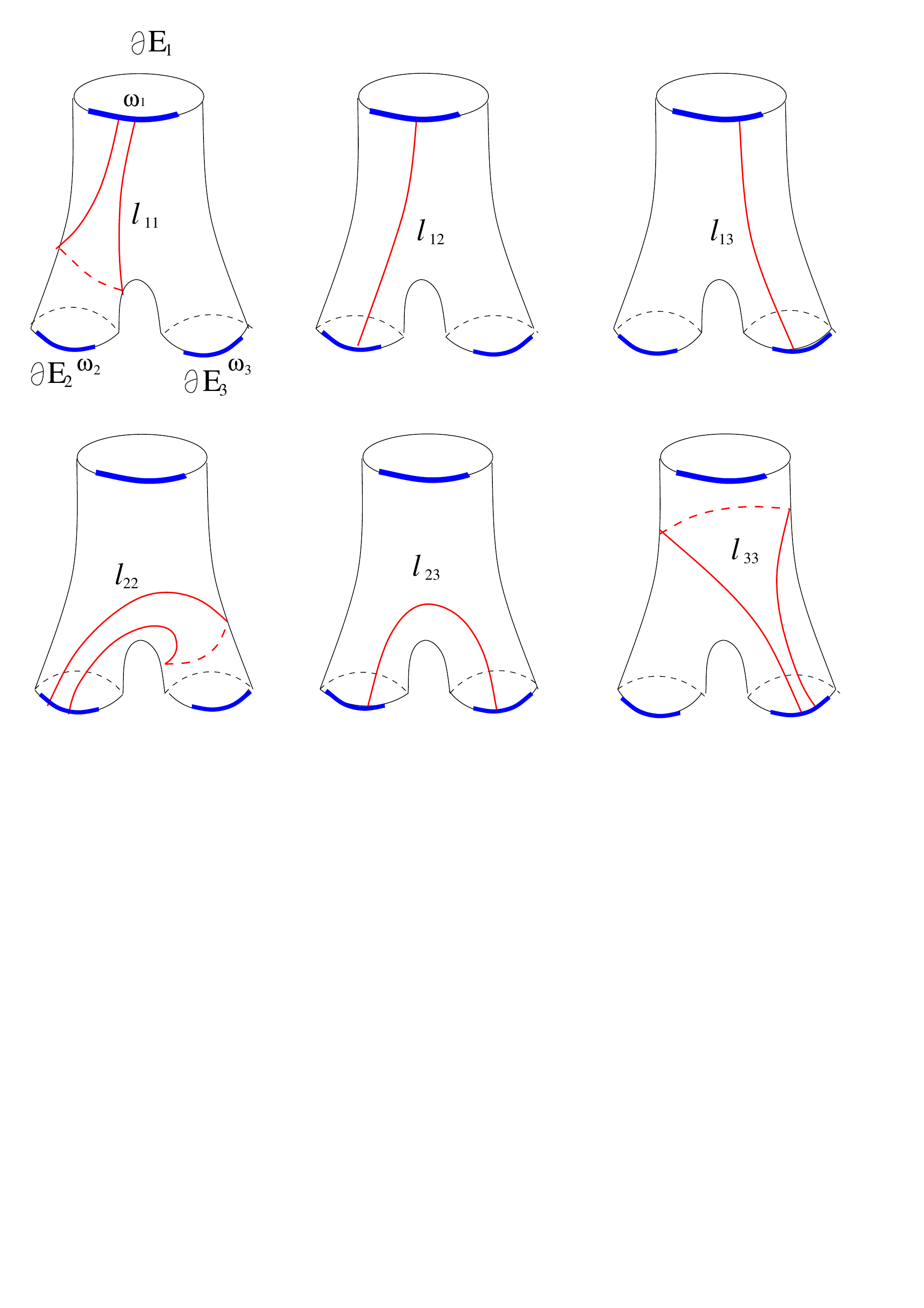}
\vskip -250pt
\caption{Standard arc in $P$}
\label{F2}
 \end{figure}

Let $\gamma$ be a simple closed curve in $\Sigma_{0,6}$. Assume that $\gamma$ meets $\partial E_i$. Then we can isotope $\gamma$ so that it meets with $\partial E_i$  in a special subarc $\omega_i$ of $\partial E_i$ only. We call $\omega_i$ the \textit{window} in $\partial E_i$.Then $\gamma$ has  parallel arcs which are the same type in $P$. Let $l_{ij}$ be the arc type of them as in Figure~\ref{F2} which is called the \textit{standard} arc type. (Refer to \cite{4}.) Let $x_{ij}$ be the number of parallel arcs of the type $l_{ij}$ which is called the \textit{weight} of $l_{ij}$ for $\gamma$. There is a relation between $x_{ij}$ and the geometric intersection number $|\gamma\cap \omega_s|$ as follows.
\begin{Lem}[\cite{1}]\label{T1}
Let $I_s=|\gamma\cap \omega_s|$, where  $s\in\{1,2,3\}$. Then $I_s$   determine $x_{ij}$.
Especially, if $x_{ii}>0$ for some $i$, then $x_{ij} = I_j, x_{ik} = I_k$
and $x_{ii} = {I_i-I_j-I_k\over 2}$. Also, if $x_{ii}=0$  for all $i$, then $x_{ij}={I_i+I_j-I_k\over 2}$ for distinct $i,j,k$. 
\end{Lem}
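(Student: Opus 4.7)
The plan is to derive the inversion formulas by endpoint counting on the windows $\omega_s$, after first reducing the relevant Dehn weights using structural constraints that the simplicity of $\gamma$ forces on the arc pattern $\gamma \cap P$.

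First I would isotope $\gamma$ into minimal position so that it meets each $\partial E_i$ inside the window $\omega_i$. Then $\gamma \cap P$ becomes a disjoint union of properly embedded arcs, each isotopic to one of the standard types $l_{ij}$ of Figure~\ref{F2}. Counting endpoints on $\omega_s$---an $l_{ss}$-arc contributes $2$, an $l_{sj}$-arc with $j \ne s$ contributes $1$---gives the three linear equations
$$I_s = 2\,x_{ss} + x_{si} + x_{sj}, \qquad \{s,i,j\} = \{1,2,3\},$$
in the six unknowns $\{x_{ij}\}$.

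Next I would invoke two structural constraints that cut the unknowns down to three at a time: (i) at most one of $x_{11}, x_{22}, x_{33}$ is positive, and (ii) if $x_{ii} > 0$ for some $i$, then the ``opposite'' connector weight $x_{jk}$ vanishes, where $\{i,j,k\} = \{1,2,3\}$. Each reduces to an analysis of which standard arcs can coexist disjointly in $P$ as components of the arc pattern of a single simple closed curve: two waves $l_{ii}$ and $l_{jj}$, or a wave $l_{ii}$ together with a connector $l_{jk}$, can in each case be ruled out because, once glued to the arcs in $E_1'\cup E_2'\cup E_3'$, they force either a self-intersection of $\gamma$ or a bigon with $\partial E$, contradicting the minimal-position choice. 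These are exactly the compatibility conditions that underlie the parametrization in \cite{1}, and I would reference them rather than re-prove them in detail.

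With (i) and (ii) in hand, the inversion is a short linear-algebra exercise. If all $x_{ii} = 0$, the system $I_s = x_{si} + x_{sj}$ gives $x_{ij} = (I_i + I_j - I_k)/2$ by pairwise addition and subtraction. If $x_{ii} > 0$ for some $i$, then $x_{jj} = x_{kk} = x_{jk} = 0$, so the remaining equations read $I_j = x_{ij}$, $I_k = x_{ik}$, and the $s=i$ equation yields $x_{ii} = (I_i - I_j - I_k)/2$. The main obstacle I anticipate is the rigorous topological verification of constraints (i) and (ii)---the heart of Dehn's parametrization---while the surrounding linear algebra is routine once those constraints are established.
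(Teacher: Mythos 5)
Your proposal is correct, but note that the paper itself contains no proof of this lemma: it is stated as a citation of \cite{1}, so there is no internal argument to compare against, and your write-up essentially reconstructs the standard derivation that the reference supplies (endpoint counting on the windows plus the disjointness constraints on arc types in a pair of pants, as in \cite{4}). Two refinements would make your sketch self-contained. First, your constraints (i) and (ii) do not require gluing into $E_1'\cup E_2'\cup E_3'$; they are intrinsic to $P$. Minimal position rules out bigons, so every component of $\gamma\cap P$ is essential in $P$; an essential $l_{ii}$ separates $\partial E_j$ from $\partial E_k$, hence must meet any arc of type $l_{jk}$, and cutting $P$ along $l_{ii}$ places $\partial E_j$ in an annulus, inside which any arc with both endpoints on $\partial E_j$ is boundary-parallel, so an essential $l_{jj}$ disjoint from $l_{ii}$ cannot exist. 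This is cleaner than your appeal to self-intersections after gluing, which as stated is only a heuristic. Second, to justify the lemma's opening claim that the $I_s$ alone determine the $x_{ij}$, you should observe that the two cases are detectable from the data: $x_{ii}>0$ forces $I_i > I_j+I_k$, whereas $x_{ii}=0$ for all $i$ forces the triangle inequalities $I_i \le I_j+I_k$; hence there is no ambiguity about which inversion formula applies. With those two points added, your argument is a complete proof of the statement the paper leaves to \cite{1}.
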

 We note that $\gamma$ can be determined by $x_{ij}$ and the patterns in $E_i'$. So, we can parametrize  $\gamma$  in $\Sigma_{0,6}$ as follows. \\
\begin{figure}[htb]
\includegraphics[scale=.7]{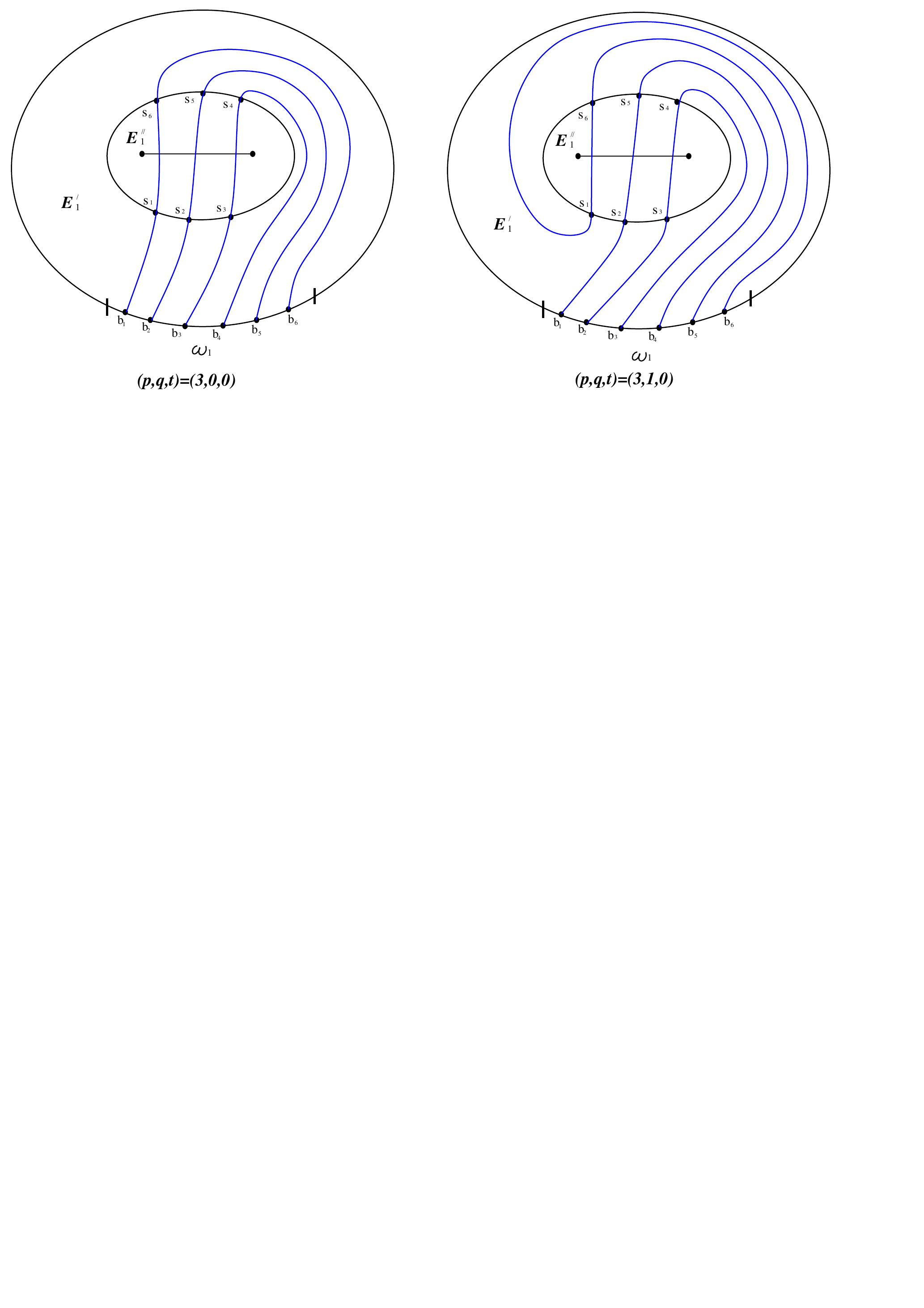}
\vskip -420pt
\caption{parameterization in two punctured disk}
\label{F3}
\end{figure}

 \begin{Thm} [Special case of Dehn's Theorem ~\cite{4}]
There is a parametrization of closed essential 1-manifolds in $\Sigma_{0,6}$ by ordered sequences of nine parameters $(p_1,q_1,t_1,p_2$ $,q_2,t_2,p_3,q_3,t_3).$
 (where $p_i,q_i\in \mathbb{Z}^+\cup \{0\}$ and $t_i\in \mathbb{Z}$ for $i=1,2,3.)$, where  $2p_i=I_{i}$  and $2p_i=J_{i}$. 
\end{Thm}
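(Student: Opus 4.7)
The plan is to cut $\Sigma_{0,6}$ along $\partial E=\partial E_1\cup\partial E_2\cup\partial E_3$ and read off the nine parameters region by region. First I would isotope $\gamma$ so that $\gamma\cap\partial E_i\subset\omega_i$ for each $i$; this cuts $\Sigma_{0,6}$ into the pair of pants $P$ together with the three twice-punctured disks $E_1',E_2',E_3'$, and cuts $\gamma$ into properly embedded arc (or circle) systems in each region. Since $\gamma$ is a closed $1$-manifold, $I_i=|\gamma\cap\omega_i|$ is even, so I set $p_i:=I_i/2$; by a dual construction on a fixed standard arc inside $E_i'$ joining the two punctures, I also set $q_i:=J_i/2$.

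Second I would show that the pattern of $\gamma\cap P$ is completely determined by $(p_1,p_2,p_3)$. This uses Lemma~\ref{T1}: the six weights $x_{ij}$ of the standard arc types $l_{ij}$ in $P$ are explicit functions of $I_1,I_2,I_3=2p_1,2p_2,2p_3$. Because $P$ is a pair of pants, a weighted standard arc system has a unique isotopy class rel boundary, so no extra parameter is required for $P$.

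Third I would parametrize $\gamma\cap E_i'$ inside each twice-punctured disk. A proper arc or circle system in a twice-punctured disk with prescribed numbers of boundary endpoints $2p_i$ and intersections $2q_i$ with the standard arc between the two punctures is determined up to isotopy rel boundary by one further integer $t_i$, namely the signed count of Dehn twists about $\partial E_i$ applied to a reference configuration with coordinates $(p_i,q_i)$ as drawn in Figure~\ref{F3}. Gluing the three pieces back along the $2p_i$ endpoints of each window $\omega_i$ reassembles $\gamma$ from $(p_i,q_i,t_i)_{i=1,2,3}$, and conversely any nine-tuple determines a unique isotopy class because the pieces can be glued with any compatible cyclic matching and this matching is recorded by the $t_i$.

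The step I expect to be the main obstacle is the careful definition of the twist coordinate $t_i$. When $p_i>0$, the cyclic ambiguity in matching boundary endpoints on $\omega_i$ must be lifted to an honest integer, and I would need to check that this lift is independent of the chosen reference configuration so that $t_i\in\mathbb{Z}$ is genuinely well-defined. When $p_i=0$, $\gamma\cap E_i'$ is a disjoint union of circles and $t_i$ must be reinterpreted as a winding number around one of the punctures; producing a single unified convention that specializes correctly in both regimes, and verifying that the formulas of Lemma~\ref{T1} remain coherent after Dehn twisting, is the delicate bookkeeping that underlies the theorem.
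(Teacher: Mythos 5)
The paper does not actually prove this statement: it is imported verbatim as a special case of Dehn's theorem, with the proof deferred to Penner--Harer~\cite{4} (and to the author's earlier paper~\cite{1}, where the same coordinates are set up). So there is no in-paper argument to match your proposal against; what you have written is a reconstruction of the standard Dehn--Thurston coordinate argument for the decomposition of $\Sigma_{0,6}$ into the pair of pants $P$ and the three twice-punctured disks $E_i'$, and it follows the same cut-count-twist scheme as the cited source. Your outline is sound: evenness of $I_i$ from the separating property of $\partial E_i$, determination of $\gamma\cap P$ by $(I_1,I_2,I_3)$ via Lemma~\ref{T1} (note the two cases $x_{ii}>0$ and $x_{ii}=0$ are mutually exclusive given the triangle inequality, so the weights really are well defined), and a twist integer per curve. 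Two caveats are worth recording. First, your closing claim that ``any nine-tuple determines a unique isotopy class'' is too strong as stated: the map is a bijection only after imposing the usual conventions at the degenerate strata (e.g.\ when $p_i=0$ the twist must be normalized, say $t_i\ge 0$, since it then counts parallel copies of $\partial E_i$ rather than a genuine twist), precisely the bookkeeping you flag as the delicate step --- so this is a known gap in the write-up rather than in your understanding. Second, the statement in the paper contains a typo you have silently corrected: it should read $2q_i=J_i$, not $2p_i=J_i$, where $J_i$ is the intersection count with the window of the inner disk $E_i''$ of Figure~\ref{F3}; your identification of $q_i$ with half an intersection number dual to $p_i$ is the intended reading.
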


In the diagrams in Figure~\ref{F3}, $E_i''$ is the inner two punctured disk in $E_i'$ to have only parallel arcs. Then  $q_i$ and $t_i$ determine all the arc types and weights of them in $E_i'$ by determining the connecting pattern between $b_i$ and $s_j$. \\

We are dealing with the simple arc instead of the simple closed curve for proving the main theorem.
We note that the  argument in this section is in terms of simple closed curves not a simple arc. The simple closed curves obtained from the simple arcs can be parameterized by the argument in this section. Moreover,  a simple arc $\beta_i$ is a bridge arc if and only if $\gamma_i$ obtained from $\beta_i$ bounds a compressing disk in $B-\epsilon$.  So, we can discuss the all argument in terms of simple arcs instead of simple closed curves if we consider the exceptional  arc types having a puncture as one of the endpoints of them.

\section{Standard diagram and the proof of the last case}

Recall that $\{\beta_1,\beta_2,\beta_3\}$ is a collection of disjoint bridge arcs on $\Sigma_{0,6}$ for $T=(B,\epsilon)$. 
  In order to analyse $\beta_i$, we set $\Sigma_{0,6}$ with the three disjoint non-parallel simple closed curves $\partial E_1, \partial E_2$ and $\partial E_3$.  Let $P'=\partial B-(\cup_{i=1}^2 E_i'').$
Now, take equators $e_i$ for each $E_i''$ as in Figure~\ref{F4}. Then $e_i$ divides $E_i''$ into $E_i''^{+}$ and $E_i''^{-}$ as in Figure~\ref{F4}. We note that $\partial E_i$ is isotopic to $\partial E_i''$ for $i=1,2,3$. Now, we introduce a lemma which helps us to realize $\beta$ in a certain formed diagram called \textit{standard diagram}. 
 \begin{Lem}[Lemma $6.1$ in \cite{1}]\label{L3}
 Let $N$ be a compressing disk in $B-\epsilon$ and $h$ be the homeomorphism obtained from the clockwise half Dehn twist supported on $N'$ which is the 2-punctured disk in $\Sigma_{0,6}$ so that $\partial N=\partial N'$ . Then a simple closed curve $\delta$ bounds a compressing disk in $B-\epsilon$ if and only if  $h(\delta)$ bounds a compressing disk in $B^3-\epsilon$. 
 \end{Lem}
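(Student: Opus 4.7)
The plan is to promote the half Dehn twist $h$ on $\Sigma_{0,6}$ to a homeomorphism $\tilde h\colon(B,\epsilon)\to(B,\epsilon)$ that is supported in a neighbourhood of $N'$. Once such a $\tilde h$ exists, the biconditional is immediate: if $D\subset B-\epsilon$ is a compressing disk with $\partial D=\delta$, then $\tilde h(D)$ is a compressing disk in $B-\epsilon$ with boundary $h(\delta)$, and the converse follows by applying $\tilde h^{-1}$.

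First I would set up the local geometry. By the Schoenflies theorem, the sphere $N\cup N'$ bounds a $3$-ball $H_N\subset B$. Because $N$ is disjoint from $\epsilon$ and $N'$ contains exactly two punctures, those two punctures must be the endpoints of a single strand $\tau$ of $\epsilon$ lying entirely in $H_N$; any other strand with an endpoint on $N'$ would be forced to exit $H_N$ through $N$, a contradiction. Since $\epsilon$ is the trivial rational $3$-tangle, $\tau$ is boundary-parallel in $H_N$, so $(H_N,\tau)$ is a trivial $1$-tangle.

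Next I would exhibit $\tilde h$ on this local model. Identify $(H_N,\tau)$ with $(D^2\times[0,1],\tau_0)$ so that $N$ corresponds to $D^2\times\{0\}$, $N'$ to the remainder of $\partial(D^2\times[0,1])$, and $\tau_0$ is a boundary-parallel arc with both endpoints on the top face. Consider the spiralling homeomorphism $\phi(x,y,z)=(R_{\pi z}(x,y),z)$, where $R_\theta$ denotes planar rotation by $\theta$. Then $\phi$ is the identity on $D^2\times\{0\}=N$, while on $N'$ the lateral-surface twist exactly cancels the $\pi$-rotation of the top face along their common boundary circle, so that $\phi|_{N'}$ fixes $\partial N'$ pointwise and swaps the two punctures; this is by definition the half Dehn twist $h$ on $N'$. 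In general $\phi(\tau_0)\neq\tau_0$, but $\phi(\tau_0)$ is another boundary-parallel arc in $H_N$ with the same (transposed) endpoints, so by the uniqueness of trivial $1$-tangles there is an ambient isotopy of $H_N$, supported in the interior, carrying $\phi(\tau_0)$ back to $\tau_0$. Composing with this isotopy yields $\tilde h|_{H_N}$, and extending by the identity on $B-H_N$ produces the desired $\tilde h$.

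The main obstacle is verifying that the spiralling model truly realises the half Dehn twist on $N'$, and not some other element of the mapping class group of the $2$-punctured disk. The key check, which I would carry out on the explicit model above, is that the lateral twist cancels the rotation of the top face along $\partial N'$, leaving a homeomorphism that fixes $\partial N'$ pointwise and moves the two punctures once around each other; this characterises the half Dehn twist supported on $N'$ and completes the construction.
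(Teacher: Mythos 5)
The paper does not actually prove this lemma; it is imported verbatim as Lemma $6.1$ of \cite{1}, so there is no in-paper argument to compare against. Your proof is correct and is the expected one: the sphere $N\cup N'$ cuts off a ball $H_N$ whose intersection with $\epsilon$ is a single boundary-parallel strand, the spiralling model extends the half Dehn twist over $H_N$, and uniqueness of the trivial $1$-tangle supplies an ambient isotopy rel $\partial H_N$ correcting the image of the strand, so that the resulting self-homeomorphism of the pair $(B,\epsilon)$ restricts to $h$ on $\Sigma_{0,6}$ and carries compressing disks to compressing disks in both directions. The only step deserving one more sentence is the boundary-parallelism of the strand in $H_N$: it does not follow just from $\epsilon$ being trivial in $B$, but it does follow because $N$ is an essential disk in the genus-$3$ handlebody $B-n(\epsilon)$, so it cuts off a handlebody with torus boundary, i.e.\ a solid torus, which forces the strand to be trivial in $H_N$.
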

   
We note that Lemma~\ref{L3} implies that a simple arc $\beta_i$ is a bridge arc in $B-\epsilon$ if and only if $h(\beta_i)$ is a bridge arc in $B-\epsilon$.

\begin{figure}[htb]
\includegraphics[scale=.6]{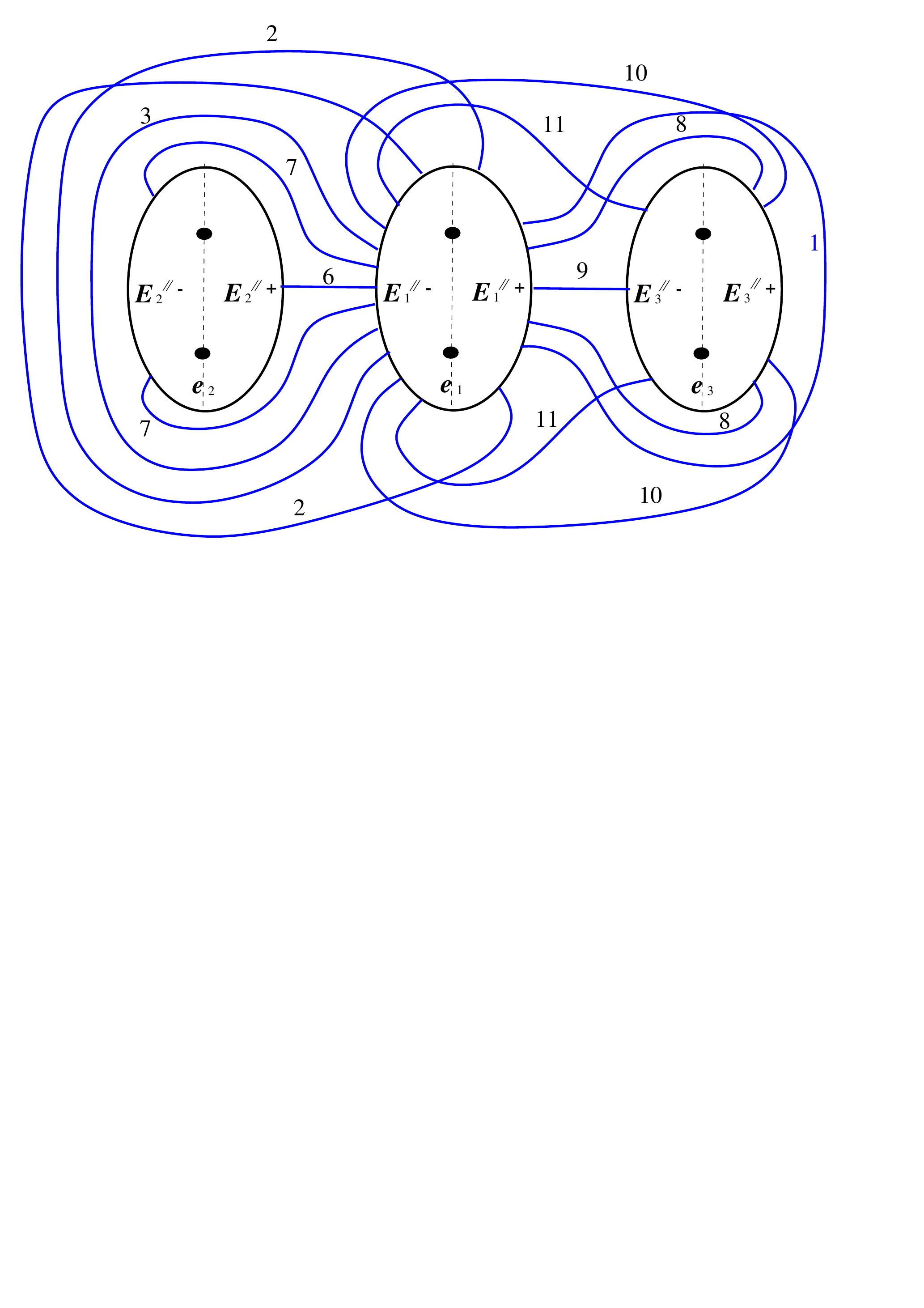}
\vskip -290pt
\caption{Standard diagram}\label{F4}
 \end{figure}
 
Let $\gamma_i$ be the simple closed curve obtained from $\beta_i$ which can be parametrized by an ordered sequence of integers $(p_1,q_1,t_1,p_2,q_2,t_2$ $,p_3,q_3,t_3)$.
We now investigate the  simple closed curve $\gamma_i'$ (it may be in a different isotopy class in $\Sigma_{0,6}$) so that every component of $\gamma_i'\cap P'$ is isotopic to one of the given real arc types  in $P'$ of one of the two diagrams as in Figure~\ref{F4} by taking a properly chosen $t_i'$ instead of $t_i$ by the following lemma.

\begin{Lem}[Lemma $8.2$ in \cite{1}]\label{L7}
Suppose that $\gamma$ is parametrized by $(p_1,q_1,t_1,p_2,q_2,t_2,p_3,q_3,t_3)$ and it has $l_{11}$ arc type in $P$.
Then there exist simple closed curves $\gamma'$ which is parameterized by $(p_1,q_1,t_1',p_2,q_2,t_2',p_3,q_3,t_3')$ for some $t_i'\in \mathbb{Z}$
so that all components of $\gamma'\cap P'$ are isotopic to one of the arcs in the  diagram.
\end{Lem}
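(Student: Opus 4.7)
The plan is to use the homeomorphisms from Lemma~\ref{L3}. A clockwise half Dehn twist $h_i$ supported on $E_i'$ is the identity outside $E_i'$ and fixes $\partial E_i$ setwise, so it preserves the intersection numbers $|\gamma\cap\omega_j|$ for every $j$. By Lemma~\ref{T1} this means $p_j$ and $q_j$ are unchanged for all $j$, $t_j$ is unchanged for $j\ne i$, while $t_i$ is shifted by a fixed period $\Delta_i$ depending only on $p_i$. Iterating $h_i$ thus realizes every residue class of $t_i$ modulo $\Delta_i$ by a simple closed curve with the same $(p_j,q_j)$ data, and by Lemma~\ref{L3} each curve in the orbit still bounds a compressing disk iff the original one does.

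Next I would analyse how the arcs of $\gamma \cap P'$ depend on $t_i$ once $(p_i,q_i)$ are fixed. Because $\gamma$ has an $l_{11}$ component we have $x_{11}>0$, and by Lemma~\ref{T1} the full weight data $x_{jk}$ is determined by the windows, so the only remaining freedom comes from the twist coordinates. Using the decomposition $E_i''=E_i''^{+}\cup E_i''^{-}$ along the equator $e_i$, the combinatorial pattern of $\gamma\cap P'$ near $\partial E_i''$ is a periodic function of $t_i$ with period $\Delta_i$. A direct inspection of one fundamental domain shows that for at least one residue class modulo $\Delta_i$ the arcs near $\partial E_i''$ coincide with arcs drawn in one of the two diagrams of Figure~\ref{F4}. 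Choosing $t_i'$ in such a residue class for each $i=1,2,3$ produces a simple closed curve $\gamma'$ parametrized by $(p_1,q_1,t_1',p_2,q_2,t_2',p_3,q_3,t_3')$ all of whose components in $P'$ are of the standard type.

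The hardest step is the matching argument in the middle paragraph: one must enumerate the arc configurations in $P'$ near each $\partial E_i''$ over a fundamental domain of the $t_i$-twist and check that at least one class reproduces the pictures in Figure~\ref{F4}. Under the $l_{11}$ hypothesis this reduces to a finite case analysis driven by the signs and magnitudes of the $x_{ij}$ together with the gluing pattern of arcs across the three windows; a small additional check is required to confirm that $\Delta_i$ is really the full period of the configuration in $P'$, not a multiple of it, which is exactly the geometric content of identifying $h_i$-orbits with residue classes modulo $\Delta_i$.
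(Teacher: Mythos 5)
First, a point of orientation: the paper does not prove this lemma at all --- it is quoted verbatim as Lemma $8.2$ of \cite{1} and simply imported. So there is no in-paper proof to compare against, and your proposal has to stand on its own. On its own it has two genuine gaps. The first is the residue-class logic. Iterating a half Dehn twist $h_i$ that shifts $t_i$ by a fixed amount $\Delta_i$ produces only the values $t_i+k\Delta_i$, i.e.\ it stays inside a \emph{single} residue class modulo $\Delta_i$ (the class of the original $t_i$); it does not ``realize every residue class.'' So if the twist values that put the arcs of $\gamma\cap P'$ in standard position happened to lie in a different class, your orbit would never reach them. This detour through homeomorphisms is also unnecessary for the statement as written: the lemma only asserts the \emph{existence} of some curve with coordinates $(p_1,q_1,t_1',\dots)$, and the paper explicitly warns that $\gamma'$ ``may be in a different isotopy class'' than $\gamma$. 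Dehn's parametrization theorem already realizes every coordinate tuple by a curve, so the honest content of the lemma is purely combinatorial: identify which values of $t_i'$ force every component of the curve's intersection with $P'$ to be one of the finitely many drawn arc types. (Lemma~\ref{L3} is what the paper uses \emph{afterwards}, to argue that passing to $\gamma'$ preserves the compressing-disk property; it is not part of this lemma.)

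The second and more serious gap is the periodicity claim, which you yourself flag as ``a small additional check'' but which is really the crux and is doubtful as stated. The twist coordinate $t_i$ measures winding of the arcs in the annular region between $\partial E_i'$ and $\partial E_i''$ (equivalently, the relative twisting of the $P'$-side pattern against the $E_i''$-side pattern), and increasing $|t_i|$ adds winding rather than cycling through a finite set of configurations. So the set of $t_i'$ for which all components of $\gamma'\cap P'$ are standard is more plausibly a bounded set of exceptional values than a union of residue classes, and the ``inspect one fundamental domain'' step has nothing to inspect. A correct argument has to (i) use the $l_{11}$ hypothesis together with Lemma~\ref{T1} to pin down the weights $x_{jk}$ and hence the pair-of-pants pattern of $\gamma\cap P$, and (ii) show that for an explicit choice of $t_i'$ the arcs crossing each annulus $E_i'-E_i''$ can be taken unwound, matching the finitely many arc types of Figure~\ref{F4}. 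Your proposal asserts (ii) without the untwisting computation and uses the $l_{11}$ hypothesis only to say $x_{11}>0$, so as written it does not establish the lemma.
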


We say that a simple closed curve $\gamma$ is in \textit{standard position} if all componet of $\gamma\cap P'$ are isotopic to one of the arcs in the diagram of Figure~\ref{F4}. The diagram in Figure~\ref{F4} is called \textit{standard diagram}.
Let $\beta_i'$ be the simple arc so that $\gamma_i'$ is the simple closed curve obtained from $\beta_i'$.  We note that the components of $\beta_i'\cap P'$ are also carried by the arc types in the standard diagram. In order to see this, it is enough to investigate the component in $P'$ which connects the puncture in $E_i''$ directly as in Figure~\ref{F5}. We note that $\gamma$ has the two components $(a)$ and $(b)$ and $\beta$ has the component ($b'$) which is parallel to the component $(b)$. 
The original lemma  is for a simple closed curve not a multiple of simple closed curves. However, there is no difference between a simple closed curve and multiple simple closed curves when we apply the above lemma. It works for $\beta'=\beta_1'\cup\beta_2'\cup \beta_3'$ as well. We  note that the changes of the twisting number $t_i$ would preserve the density since the half Dehn twist along $E_i''$ just rotate  the position of the all endpoints on $\partial E_i''$ by $180^\circ$.
We also note that the trivial rational $3$-tangle $\epsilon$ in $B$ is preserved when we apply the half Dehn twists supported on $N'$ defined in Lemma~\ref{L3}. For  additional modifications of $\beta$, we consider the following two lemmas and the theorem.

\begin{figure}[htb]
\includegraphics[scale=.4]{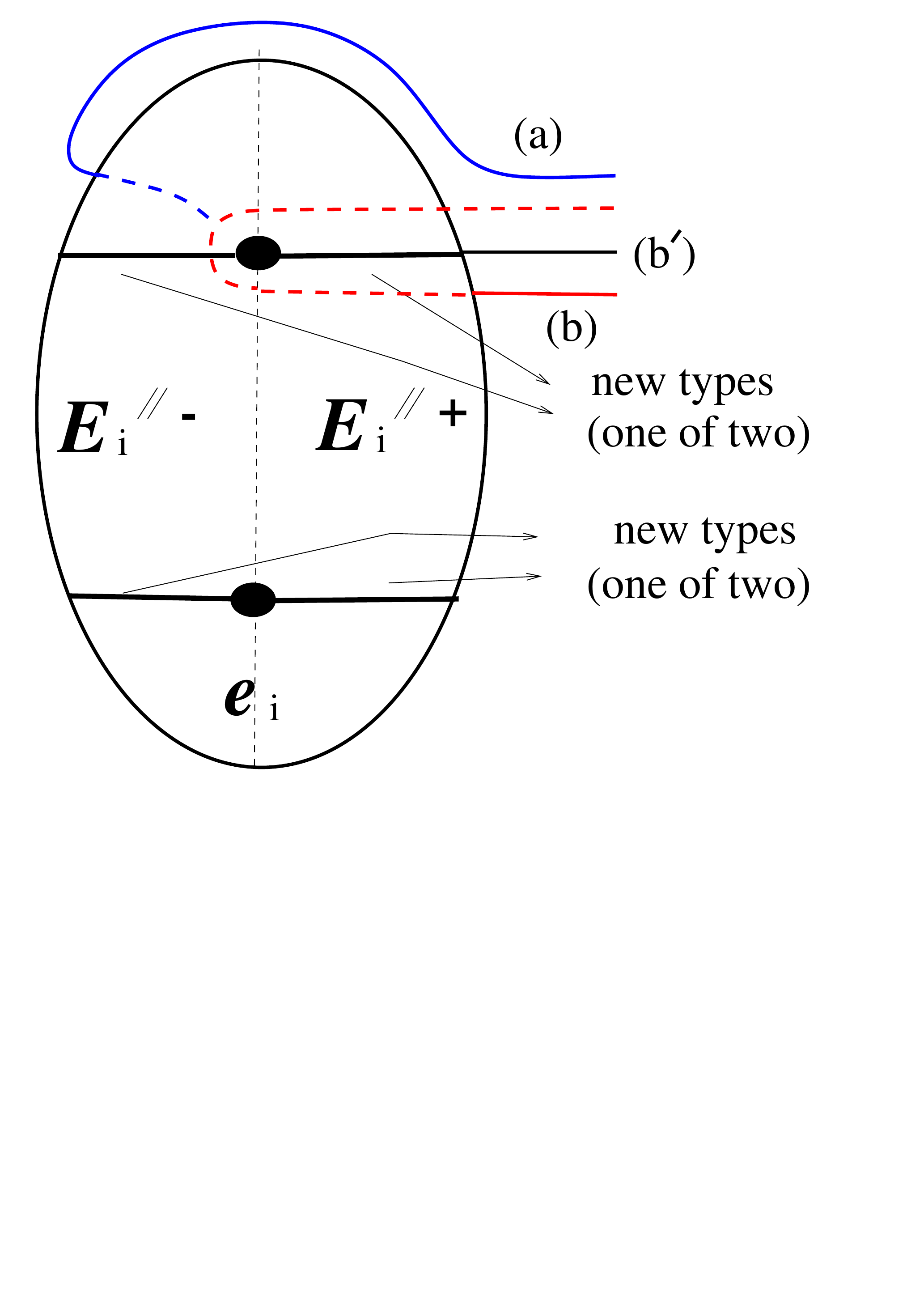}\vskip -130pt
\caption{}\label{F5}
\end{figure}

\begin{Lem}[Lemma $9.1$ in \cite{1}]\label{L8}
 If $A$ is a compressing disk in $B^3-\epsilon$, then  $(\delta_1\delta_2^{-1})^{\pm 1}(\partial A)$ also bounds a compressing disk in $B^3-\epsilon$. (where $\delta_1$ and $\delta_2$ are the clockwise half Dehn twists supported on two punctured disks $C_1$ and $C_2$ respectively as in Figure~\ref{6}.)
\end{Lem}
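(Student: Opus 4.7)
The plan is to upgrade the boundary operation $\delta_1 \delta_2^{-1}$ on $\Sigma_{0,6}$ to an honest self-homeomorphism of the pair $(B^3, \epsilon)$. Once such an extension $\Phi \colon (B^3, \epsilon) \to (B^3, \epsilon)$ is in hand, the lemma is immediate: for any compressing disk $A$ in $B^3 - \epsilon$, the image $\Phi(A)$ is again a compressing disk in $B^3 - \epsilon$, and its boundary is $(\delta_1 \delta_2^{-1})(\partial A)$. The $-1$ case follows by using $\Phi^{-1}$ in place of $\Phi$.

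To construct $\Phi$, I would first read off from Figure $6$ exactly how the two twice-punctured disks $C_1, C_2$ sit in $\Sigma_{0,6}$ relative to $\epsilon$. The crucial point---and the reason one needs the \emph{signed} combination $\delta_1 \delta_2^{-1}$ rather than either twist by itself---is that a single half Dehn twist $\delta_i$ generically permutes the two endpoints of $\epsilon$ inside $C_i$ in a way that obstructs an ambient extension, while the opposite signs on $\delta_1$ and $\delta_2^{-1}$ cancel those boundary permutations when $C_1$ and $C_2$ are paired properly. I would then exhibit a properly embedded arc (or a thin tube around a subarc of $\epsilon$) inside $B^3$ linking $C_1$ to $C_2$, and take $\Phi$ to be the ambient $180^\circ$ rotation supported in a regular neighborhood of $C_1 \cup C_2$ together with this connecting tube. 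Such a rotation induces $\delta_1$ on $C_1$, $\delta_2^{-1}$ on $C_2$, and the identity elsewhere on $\partial B^3$; since it is a homeomorphism of $B^3$ preserving $\epsilon$ setwise, it is the desired $\Phi$.

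The main obstacle is the rigorous verification that the boundary trace of this rotation really is $\delta_1 \delta_2^{-1}$ up to isotopy relative to the six punctures, rather than some homotopically different combination of twists. I would do this by tracking the images of a complete system of essential simple closed curves on $\Sigma_{0,6}$ (for instance $\partial E_1, \partial E_2, \partial E_3$ together with the standard arcs of Figure $4$) and applying the Alexander method for punctured spheres. As a sanity check, this strategy is consistent with Lemma~\ref{L3}: if both $\partial C_1$ and $\partial C_2$ happened to bound compressing disks individually, the result would follow from two applications of that lemma; Lemma~\ref{L8} is stronger precisely because it covers the case where neither $\partial C_i$ does, and the ambient rotation above is what supplies the extra leverage.
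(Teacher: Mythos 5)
First, note that the paper does not actually prove this statement: it is imported verbatim as Lemma~9.1 of \cite{1}, so there is no in-text argument to compare yours against. Judged on its own terms, your strategy --- realize $\delta_1\delta_2^{-1}$ as the boundary restriction of a self-homeomorphism $\Phi$ of the pair $(B^3,\epsilon)$ obtained by a $180^\circ$ rotation supported in a neighborhood of $C_1\cup C_2$ together with a connecting tube --- is the standard and almost certainly the intended argument, and your reduction of the $\pm 1$ cases to $\Phi^{\pm 1}$, as well as your closing remark relating this to Lemma~\ref{L3}, are both correct.

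Two points need repair. First, your explanation of why the two twists must carry opposite signs is off: replacing $\delta_2$ by $\delta_2^{-1}$ does not change the induced permutation of the punctures (a half twist and its inverse induce the same transposition), so opposite signs cannot ``cancel boundary permutations.'' The opposite signs arise for a different reason: the two feet $C_1$ and $C_2$ of the $1$-handle inherit opposite orientations from the outward normal of $\partial B^3$, so a single rigid $180^\circ$ rotation of the handle about its core is read on $\partial B^3$ as $\delta_1$ on one foot and $\delta_2^{-1}$ on the other. Second, and more seriously, your sentence ``since it is a homeomorphism of $B^3$ preserving $\epsilon$ setwise, it is the desired $\Phi$'' asserts exactly the content of the lemma rather than proving it. Whether the rotation preserves $\epsilon$ setwise depends entirely on how the four punctures of $C_1\cup C_2$ are distributed among the strands: the induced permutation (swap the two punctures of $C_1$)(swap the two punctures of $C_2$) must preserve the partition of the six endpoints into strand-pairs. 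If, say, $C_1$ contained one endpoint each of $\epsilon_1$ and $\epsilon_2$ while $C_2$ contained one endpoint each of $\epsilon_2$ and $\epsilon_3$, this permutation would scramble the pairing and \emph{no} extension to a self-homeomorphism of $(B^3,\epsilon)$ could exist. The lemma holds because, in the configuration of Figure~\ref{6}, $C_1\cup C_2$ carries the four endpoints of exactly two strands of $\epsilon$, one endpoint of each strand in each disk, so that the rotation interchanges those two strands and fixes the third. You must verify this distribution explicitly, and then check that the two rotated strands are isotopic (rel the rest of $\epsilon$) back to the original trivial tangle; once that is done, the Alexander-method verification you propose for identifying the boundary trace with $\delta_1\delta_2^{-1}$ is a sound way to finish.
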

\begin{figure}[htb]
\includegraphics[scale=.6]{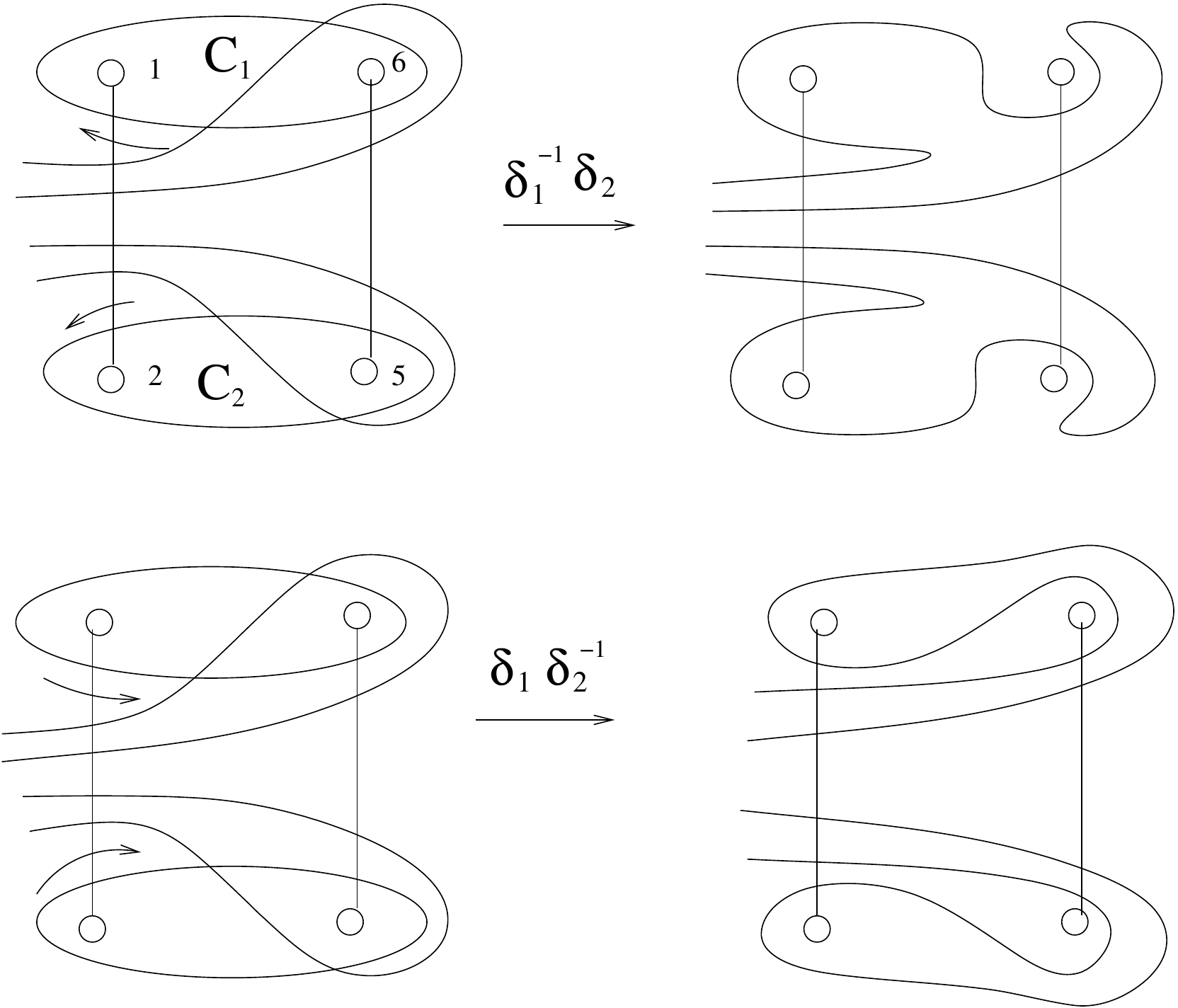}
\caption{Half Dehn twists to preserve the boundness of a compressing disk in $B^3-\epsilon$}\label{6}
 \end{figure}

\begin{Lem}[Lemma $9.2$ in \cite{1}]\label{L9}
Suppose that $ A$ is a compressing disk in $B^3-\epsilon$. Let $\delta_3$ be the counter clockwise half Dehn twist supported on the 2-punctured disk $E_4'$ as in Figure~\ref{7}.
Then $\delta_3(\partial A)$ also bounds a compressing disk $\delta_3(A)$ in $B^3-\epsilon$.
\end{Lem}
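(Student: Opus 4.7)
My plan is to realize the half Dehn twist $\delta_3$ as the boundary restriction of a self-homeomorphism $\Phi$ of the pair $(B^3,\epsilon)$. Once such a $\Phi$ exists the conclusion is immediate: for any compressing disk $A\subset B^3-\epsilon$, the image $\Phi(A)$ is again a properly embedded disk in $B^3-\epsilon$ with boundary $\Phi(\partial A)=\delta_3(\partial A)$, and it remains essential because $\Phi$ is a homeomorphism of the pair, so one may take $\delta_3(A):=\Phi(A)$. This follows the same template as Lemma~\ref{L3}, with the supporting 2-punctured disk taken to be $E_4'$ and the twist run in the opposite direction.

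To construct $\Phi$, I would first read off from Figure~\ref{7} a compressing disk $E_4\subset B^3-\epsilon$ with $\partial E_4=\partial E_4'$; the sphere $E_4\cup E_4'$ then cuts off a 3-ball $H_4\subset B^3$ which intersects $\epsilon$ in a single unknotted arc $\epsilon_i$, so that $(H_4,\epsilon_i)$ is a trivial 1-tangle. Choose a standard parametrization of $H_4$ in which $\epsilon_i$ is a horizontal chord, and define $\sigma:H_4\to H_4$ to be a damped rotation by $\pi$ about an axis perpendicular to $\epsilon_i$ through its midpoint, with the damping arranged so that $\sigma$ is the identity on a collar of $E_4$ and equal to the full $\pi$-rotation on a neighborhood of the midpoint of $\epsilon_i$. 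Then $\sigma$ preserves $\epsilon_i$ setwise (swapping its two endpoints on $E_4'$), fixes $E_4$ pointwise, and restricts on $E_4'$ to a half Dehn twist about $\partial E_4'$. Extending $\sigma$ by the identity on $B^3-H_4$ yields the required $\Phi$; continuity across $E_4$ is automatic because $\sigma$ is the identity there.

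The step I expect to require the most care is verifying that $\sigma|_{E_4'}$ realizes exactly the counter-clockwise half Dehn twist $\delta_3$ drawn in Figure~\ref{7}, rather than its inverse $\delta_3^{-1}$. This reduces to a local model calculation in a tubular neighborhood of $\epsilon_i$: the sign of the induced twist on the 2-punctured disk is determined by the choice of rotation axis, and flipping the axis to the opposite side of $\epsilon_i$ reverses the twist. Matching the axis to the orientation convention of Figure~\ref{7} pins down the counter-clockwise direction. Once $\Phi$ is shown to be a self-homeomorphism of $(B^3,\epsilon)$ inducing $\delta_3$ on $\Sigma_{0,6}$, the lemma follows with $\delta_3(A):=\Phi(A)$.
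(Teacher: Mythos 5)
First, note that the paper does not prove this statement at all: it is imported verbatim as Lemma~$9.2$ of \cite{1}, so there is no in-text argument to compare yours against. Judged on its own terms, your proposal has a gap at its very first substantive step. Your entire strategy rests on reading off ``a compressing disk $E_4\subset B^3-\epsilon$ with $\partial E_4=\partial E_4'$'' cutting off a ball $H_4$ that meets $\epsilon$ in a single arc with both endpoints on $E_4'$. That is precisely the hypothesis of Lemma~\ref{L3} (Lemma $6.1$ of \cite{1}), and if $E_4'$ were of that type, Lemma~\ref{L9} would be a special case of Lemma~\ref{L3}: the twist would extend to a homeomorphism of the pair $(B^3,\epsilon)$, the conclusion would be an if-and-only-if, and the direction of the twist would be irrelevant. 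The lemma is stated separately, with the counter-clockwise direction singled out and only a one-way conclusion, exactly because $E_4'$ is not such a disk --- just as in Lemma~\ref{L8}, where neither $\delta_1$ nor $\delta_2$ alone preserves the disk set and only the combination $\delta_1\delta_2^{-1}$ does. For the $E_4'$ of Figure~\ref{7}, $\partial E_4'$ does not bound a compressing disk cutting off a single string, so the ball $H_4$ and the damped rotation $\sigma$ you build inside it do not exist in the form you need, and $\delta_3$ is not the restriction of a self-homeomorphism of the pair supported in such a ball.

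What the lemma actually requires is the following. One can always extend $\delta_3$ over $B^3$ (any homeomorphism of $S^2$ extends over the ball), but the extension $\Phi$ carries $\epsilon$ to a new tangle $\epsilon'=\Phi(\epsilon)$ with a crossing introduced near $E_4'$; the content of the lemma is that, for the counter-clockwise direction, this crossing can be undone by an isotopy of $B^3$ rel $\partial B^3$, so that $\Phi(A)$ can be pushed off $\epsilon$ while keeping its boundary $\delta_3(\partial A)$ fixed. Equivalently, since the disk set of the handlebody $B^3-\epsilon$ is normally generated (in $\pi_1(\Sigma_{0,6})$) by $\partial E_1,\partial E_2,\partial E_3$, it suffices to check by explicit pictures that each $\delta_3(\partial E_i)$ still bounds a compressing disk and then invoke the loop theorem. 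This verification --- which is where the dependence on the direction of the twist and on the particular position of $E_4'$ relative to $\epsilon$ enters --- is entirely absent from your proposal; the local model calculation you flag as the delicate point (which sign of twist $\sigma$ induces on $E_4'$) is not the issue.
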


\begin{figure}[htb]
\includegraphics[scale=.7]{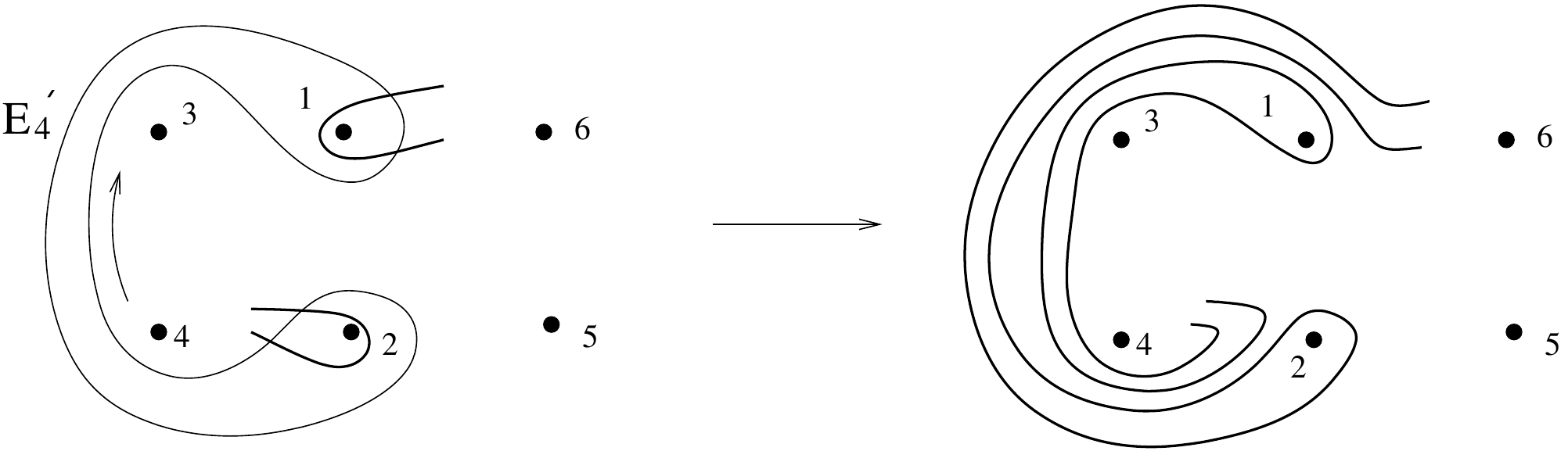}
\caption{A half Dehn twists to preserve the boundedness of a compressing disk in $B^3-\epsilon$}\label{7}
 \end{figure}
 
\begin{Thm}[Theorem $9.3$ in \cite{1}]\label{T10}
Suppose that $A$ is a compressing disk in $B^3-\epsilon$ and $\partial A$  is in standard position in $P'$.
Then one of the half Dehn twists  $(\delta_1\delta_2^{-1})^{\pm 1}$ and $\delta_3^{\pm 1}$ reduces the minimal intersection number of $\partial A$ with $\partial E$.
\end{Thm}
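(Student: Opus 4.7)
The plan is to run a wave analysis on $\partial A$ with respect to the disk system $E=E_1\cup E_2\cup E_3$ and then match each wave pattern that can occur in standard position to one of the four prescribed half Dehn twists. Set $\mu(\partial A):=I_1+I_2+I_3=2(p_1+p_2+p_3)$, which in standard position coincides with the minimal intersection number of $\partial A$ with $\partial E$; it therefore suffices to strictly reduce $\mu$.

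First I would assume $\mu(\partial A)>0$, for otherwise $\partial A$ is already isotopic to one of $\partial E_1,\partial E_2,\partial E_3$ and there is nothing to reduce. After isotoping $A$ so that $A\cap E$ is a collection of arcs with $|A\cap E|$ minimal, an outermost arc of $A\cap E$ in $A$ together with the subdisk it cuts off produces a \emph{wave}: a subarc $w\subset\partial A$ with both endpoints on some $\partial E_i$ that, together with a subarc of $\partial E_i$, bounds a disk in $E_i'$ whose interior is disjoint from $\partial A$.

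Because $\partial A$ is in standard position, only finitely many combinatorial types of wave can occur; each can be read off directly from the two standard diagrams in Figure~\ref{F4}. For each pattern I would identify a 2-punctured disk $N'\subset\Sigma_{0,6}$ whose half Dehn twist pushes $w$ across its wave disk and eliminates its two endpoints on $\partial E_i$. Inspection of the standard diagrams will show that $N'$ always coincides with one of the 2-punctured disks $C_1$ or $C_2$ of Lemma~\ref{L8} (producing $(\delta_1\delta_2^{-1})^{\pm 1}$) or with the 2-punctured disk $E_4'$ of Lemma~\ref{L9} (producing $\delta_3^{\pm 1}$); those lemmas then guarantee that the image of $\partial A$ still bounds a compressing disk in $B-\epsilon$.

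The final step is to verify that the chosen twist strictly reduces $\mu$. The twist eliminates the two intersections of $w$ with $\partial E_i$, and because the supporting 2-punctured disk meets $\partial E$ in a tightly controlled way in standard position, no compensating new intersections arise; after restoring standard position via Lemma~\ref{L7} if necessary, $\mu$ strictly decreases. The main obstacle I anticipate lies in this bookkeeping: the case analysis must be exhaustive over the standard arc types in Figure~\ref{F4}, and one has to rule out the possibility that the twist reducing some $I_i$ silently increases another $I_j$ by the same amount. Once the finite table of standard arc patterns is written out, however, the strict decrease of $\mu$ in each case should follow by direct inspection of the figure.
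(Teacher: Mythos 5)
You should first note that this paper does not actually prove Theorem~\ref{T10}: it is imported verbatim from Theorem~9.3 of \cite{1}, and the only trace of its argument here is the weight bookkeeping quoted in the proof of Lemma~\ref{L2}. Your overall strategy --- find a wave via an outermost-arc argument, enumerate the wave patterns compatible with standard position, and remove each by one of the prescribed maps while tracking weights --- is the same in spirit as the cited argument. The problem is that your proposal defers the entire mathematical content of the theorem to the sentence ``inspection of the standard diagrams will show\dots''. The theorem is not the soft statement that \emph{some} twist reduces $|\partial A\cap\partial E|$; it is the sharp statement that one of the four \emph{specific} homeomorphisms $(\delta_1\delta_2^{-1})^{\pm1}$, $\delta_3^{\pm1}$ always does. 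Establishing that is exactly the finite but nontrivial case analysis you postpone, so as written the proposal is a plan rather than a proof.

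There is also a structural misstep in the way you set up the matching. You propose to identify, for each wave $w$, a single $2$-punctured disk $N'$ whose half Dehn twist pushes $w$ across its wave disk, and then to observe that $N'$ is always $C_1$, $C_2$, or $E_4'$. But $(\delta_1\delta_2^{-1})^{\pm1}$ is not a half Dehn twist supported on a single punctured disk; it is a composite of twists on the two distinct disks $C_1$ and $C_2$ of Figure~\ref{6}, and Lemma~\ref{L8} only guarantees that the \emph{composite} preserves the property of bounding a compressing disk --- a single factor $\delta_1$ need not, since $C_1$ is not the boundary of a compressing disk (otherwise Lemma~\ref{L3} would apply and Lemma~\ref{L8} would be unnecessary). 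Consequently, even if $\delta_1$ alone removes the two endpoints of $w$ on $\partial E_i$, you are forced to also apply $\delta_2^{-1}$, and you must then verify that this second factor does not create new intersections with $\partial E$ that cancel the gain. Your closing remark that ``no compensating new intersections arise'' is precisely the point that has to be proved, arc type by arc type, using the weights of the standard diagram in Figure~\ref{F4}; without that table the strict decrease of $I_1+I_2+I_3$ is not established.
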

 
The following lemma is the main idea to prove the last case of the main theorem.
 \begin{Lem}\label{L2}
  Let $\{\beta_1,\beta_2,\beta_3\}$ be the collection of  disjoint simple arcs which are bridge arcs in $B-\epsilon$. We assume that $\beta=\beta_1\cup\beta_2\cup\beta_3$ is in standard position and all of them have a wave in terms of $\partial E''$. Then, there exist two adjacent endpoints of $\beta\cap P'$ in $\partial E_i''$  for some $i$ which belong to the same $\beta_j$ for $j=1,2,3$.
 \end{Lem}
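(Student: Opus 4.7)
The plan is to locate a boundary circle $\partial E_i''$ that carries a wave component of $\beta\cap P'$, and to read off the desired pair of adjacent endpoints from an innermost such wave. By hypothesis each $\beta_k$ has a wave in terms of $\partial E''$, so in particular some component of $\beta\cap P'$ is a wave; fix any such component and let $\partial E_i''$ denote the boundary circle on which it lies.

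In standard position, every wave of $\beta\cap P'$ based at $\partial E_i''$ is of the single standard arc type $l_{ii}$, so any two such waves are isotopic as properly embedded arcs in the pair of pants $P'$. Being isotopic and disjoint, they are pairwise parallel, and therefore the full family of waves on $\partial E_i''$ is linearly ordered by nesting. Let $w$ be an innermost one; it is a subarc of some $\beta_j$. The two endpoints of $w$ split $\partial E_i''$ into two subarcs, and since $P'$ has no interior punctures (all six punctures of $\Sigma_{0,6}$ are swallowed by the removed $E_i''$'s), one of these subarcs $\alpha$ together with $w$ bounds a disk $D\subset P'$.

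The core step is then to promote \emph{innermost among waves} to \emph{innermost among all components of $\beta\cap P'$}. Suppose some component $w'\neq w$ of $\beta\cap P'$ has an endpoint on $\alpha$. Since $\beta_1,\beta_2,\beta_3$ are mutually disjoint we have $w'\cap w=\emptyset$, so $w'$ enters $D$ across $\alpha$ and, to leave, must meet $\partial D=w\cup\alpha$ a second time through $\alpha$. Then $w'$ is itself a wave on $\partial E_i''$ strictly nested inside $w$, contradicting the choice of $w$. Hence the two endpoints of $w$ are adjacent on $\partial E_i''$, and as $w$ is a single subarc of $\beta_j$, both endpoints belong to the same $\beta_j$, which is the claim.

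The main place I expect to need care is the reduction step in the second paragraph: one must invoke Lemma~\ref{L7} and the no-bigon convention to verify that every component of $\beta\cap P'$ really is of one of the six standard types $l_{ij}$, so that the parallel/nesting argument and the disk-bounding step above are not disrupted by reducible bigons or by exceptional arcs touching the punctures. Once that is secured, the rest of the proof is the short innermost-disk contradiction sketched above.
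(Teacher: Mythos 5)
There is a genuine gap, and it is located exactly where you pass over it most quickly: the claim that one of the two subarcs $\alpha$ of $\partial E_i''$ together with the innermost wave $w$ bounds a disk $D\subset P'$. This is false. $P'$ is a pair of pants, and a wave in standard position is an \emph{essential} arc of type $l_{ii}$ with both endpoints on $\partial E_i''$; cutting $P'$ along $w$ yields two annuli, each containing one of the other boundary circles $\partial E_j''$ and $\partial E_k''$, and $w\cup\alpha$ bounds one of these annuli, never a disk. (If $w\cup\alpha$ did bound a disk, $w$ would form a bigon with $\partial E_i''$ and would not be a wave at all.) Consequently your promotion of ``innermost among waves'' to ``innermost among all components'' fails: a component entering the region cut off by $w$ through $\alpha$ need not return through $\alpha$ as a nested wave --- it can exit through $\partial E_j''$ as an arc of type $l_{ij}$. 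Such arcs, belonging to a different $\beta_k$, can have endpoints on $\alpha$ strictly between the two endpoints of $w$, so the innermost wave need not have adjacent endpoints. A telltale sign of the problem is that your argument never uses that the $\beta_i$ are bridge arcs of $(B,\epsilon)$, nor that \emph{all three} of them have waves; it would prove that any single wave of any disjoint arc system in standard position has adjacent endpoints, which is false.

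The paper's proof is of an entirely different character and genuinely needs the bridge-arc hypothesis. It is a descent argument: one applies the half Dehn twists $\delta_3^{\pm1}$ and $(\delta_1^{-1}\delta_2)^{\pm1}$, which carry bridge arcs to bridge arcs by Lemmas~\ref{L8} and~\ref{L9} and strictly reduce the intersection number with $\partial E$ by Theorem~\ref{T10}; one checks case by case (Figures~\ref{F9}--\ref{F16}) that each move preserves density; and one then examines the finitely many terminal diagrams, in which an adjacent pair of endpoints on the same $\beta_j$ is forced, contradicting the preserved density. Any correct proof has to bring in the compressing-disk structure of $B-\epsilon$ through some such mechanism; a purely two-dimensional innermost-arc argument in $P'$ cannot close the gap.
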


 \begin{proof}
 
   \begin{figure}[htb]
 \includegraphics[scale=.9]{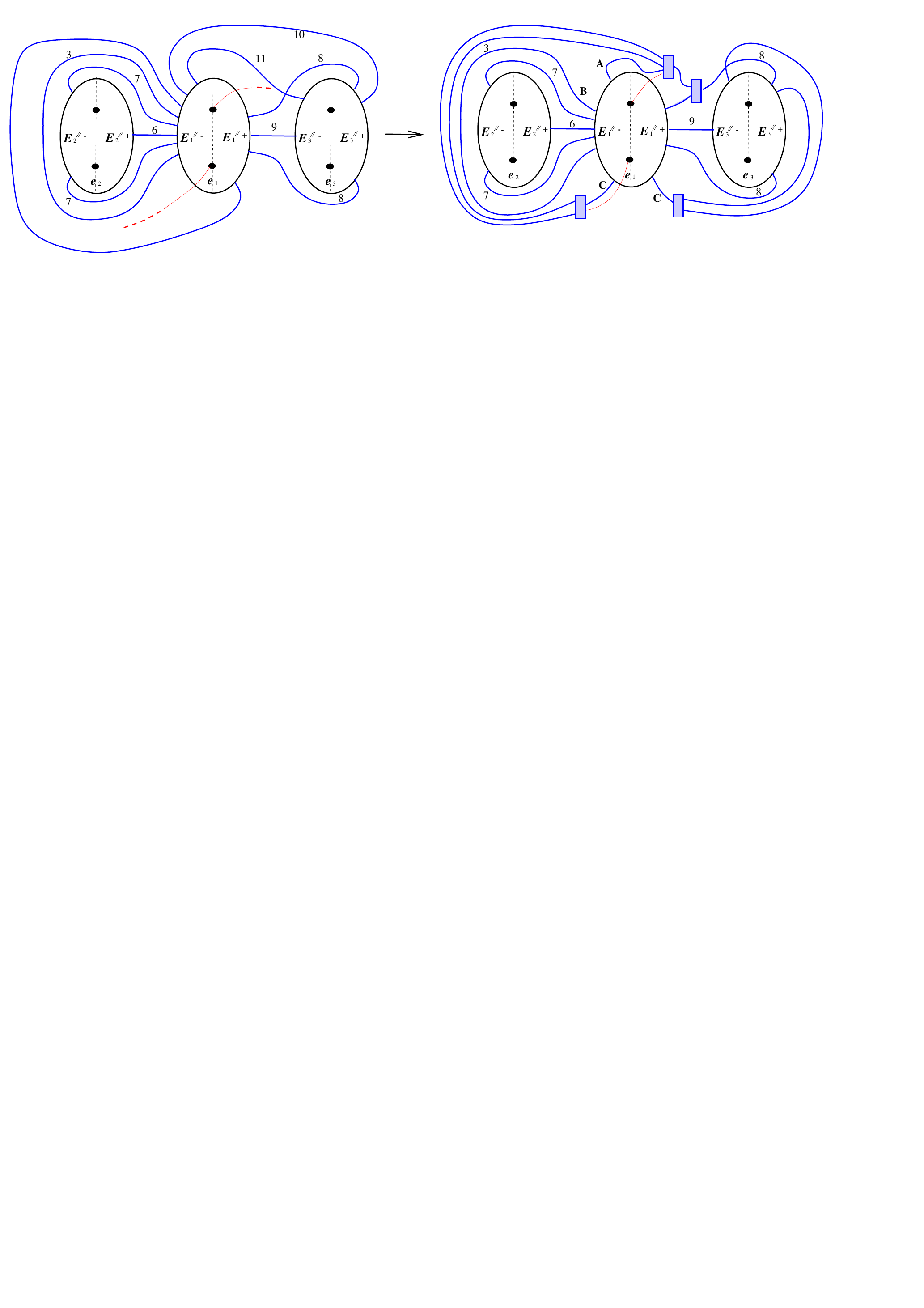}
 \vskip -620pt
 \caption{}\label{F9}
 \end{figure} 
   \begin{figure}[htb]
 \includegraphics[scale=.9]{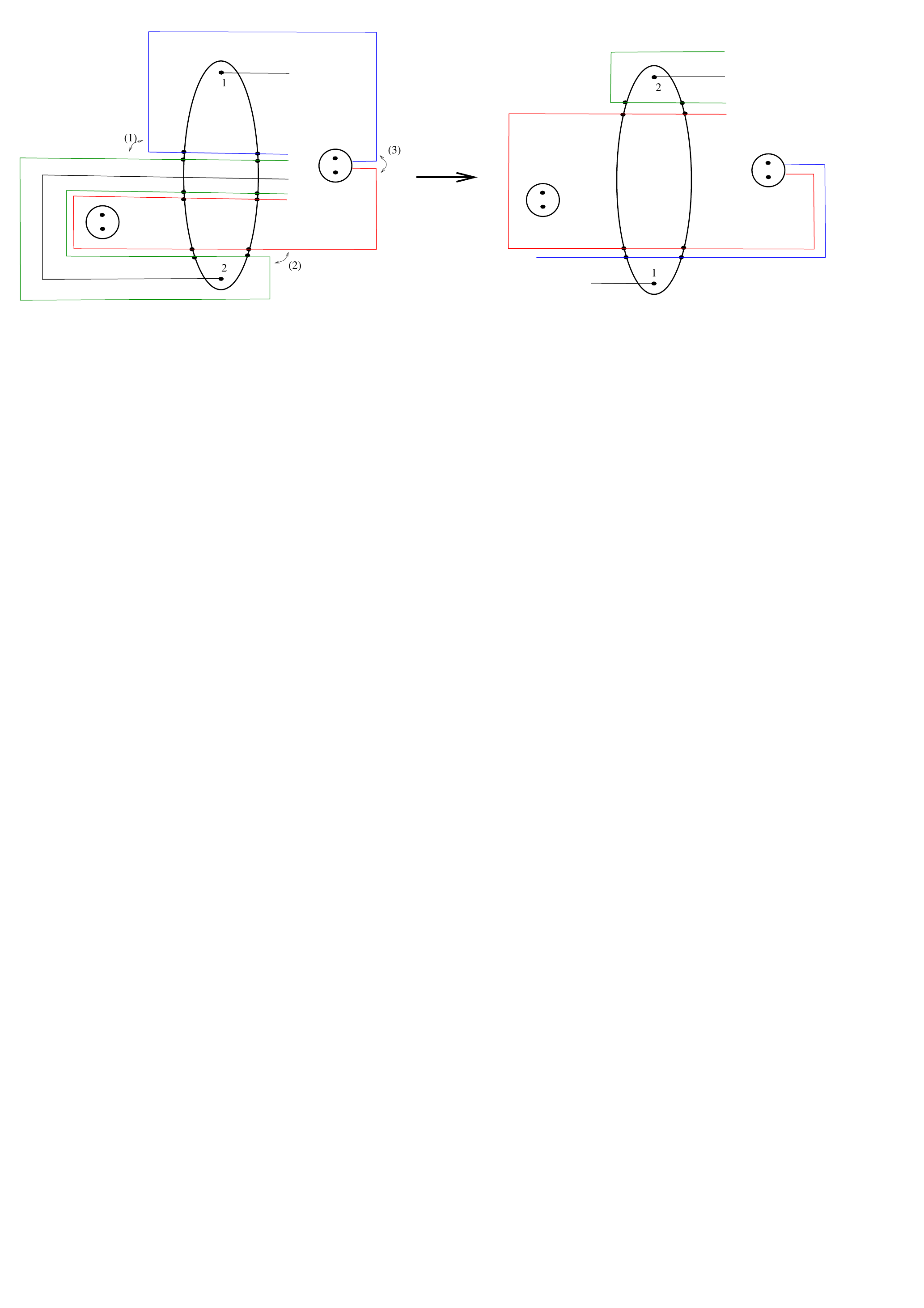}
 \vskip -580pt
 \caption{}\label{F15}
 \end{figure}

 By the assumption that all of $\beta_i$ have a wave,  without loss of generality, we assume that $\beta$ has  subarcs of type 1 or type 3 as in the first diagram of Figure~\ref{F9} and Figure~\ref{F10}. By applying the  homeomorphism $\delta_3$ in Lemma~\ref{L9} to $\beta$, we can remove the subarcs of type 2 as in the second diagram of Figure~\ref{F9} and Figure~\ref{F10} respectively.  Let $m_k$ be the weights of each type $k$. In the second diagram of Figure~\ref{F9}, we note that $\textbf{A}=m_2, \textbf{B}=m_3-m_2$ and $\textbf{C}=m_{10}+m_{11}$.
 In the right diagram of Figure~\ref{F10}, we have  $\textbf{A}=m_2$ and \textbf{B}=$m_3-m_2$. Also,  in the bottom diagram of Figure~\ref{F10}, we have $\textbf{A}=m_2-m_3, \textbf{B}=m_3$ and $\textbf{C}=m_{2},\textbf{D}=m_1-m_2, \textbf{E}=m_{7_2}+m_3-m_2$, where ${7_2}$ is lower track of the type $7$'s in the standard diagram.  For the detail of this weight changes  in Figure~\ref{F9} and Figure~\ref{F10}, refer to Theorem 9.3 in \cite{1}. We note that the homeomorphism in each  step reduces the total weight of the subarcs in $P'$ by Theorem~\ref{T10}.\\
 
  We claim that the collection of the new bridge arcs is dense in $P'$. In order to show the claim for the case described in Figure~\ref{F9}, consider the diagrams in Figure~\ref{F15}. It is enough to investigate the pattern of endpoints in $\partial E_1''$ since the endpoints in $\partial E_2''$ and $\partial E_3''$ are fixed. Since $\beta$ is dense in $P'$, the depicted two adjacent arcs in the three positions $(1),(2)$ and $(3)$ should be a part of distinct bridge arcs. This implies that the right diagram does not violate the rule to satisfy the density as well. It is not difficult to see that the other adjacent endpoints in $\partial E_1''$ except the indicated adjacent endpoints by the parallel arcs also keep the rule for the density since the patterns are the same with the previous corresponding patterns.
   \begin{figure}[htb]
 \includegraphics[scale=.9]{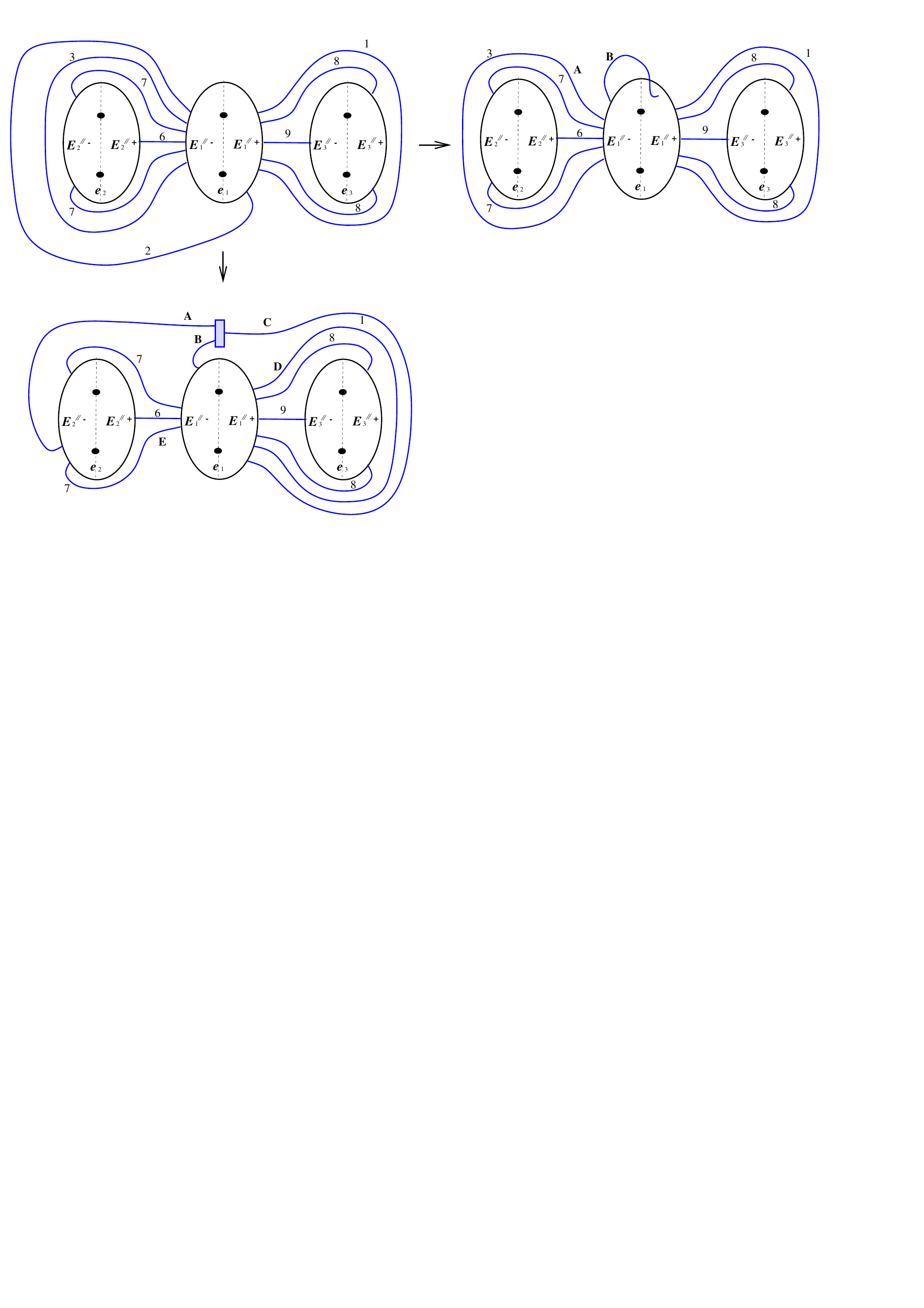}
 \vskip -450pt
 \caption{}\label{F10}
 \end{figure}
  \begin{figure}[htb]
 \includegraphics[scale=.9]{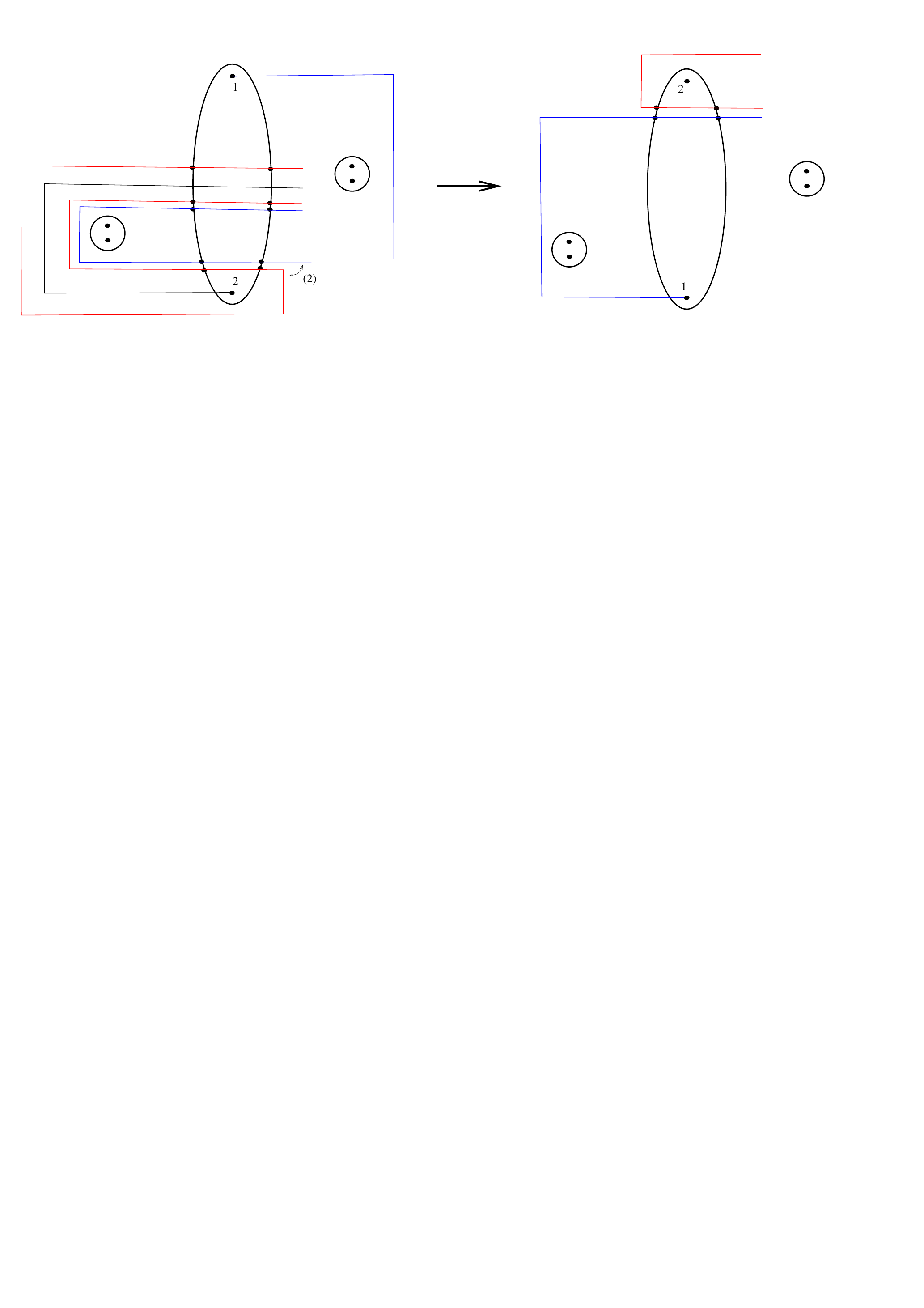}
 \vskip -550pt
 \caption{}\label{F16}
 \end{figure}
  
  Similarly, we note that the collections of the modified bridge arcs described in Figure~\ref{F10} are dense by Figure~\ref{F16}.  Finally, we would get the four diagrams as in Figure~\ref{F11} which do not have the type $2$.  First, we assume that both of $m_1$ and $m_3$ are positive as in the diagram $(A)$. Then, there is two adjacent endpoints in $\partial E_1''$ since the two punctures in $E_1''$ should be the two endpoints of the same $\beta_i$ for some $i=1,2,3$.

 \begin{figure}[htb]
 \includegraphics[scale=.9]{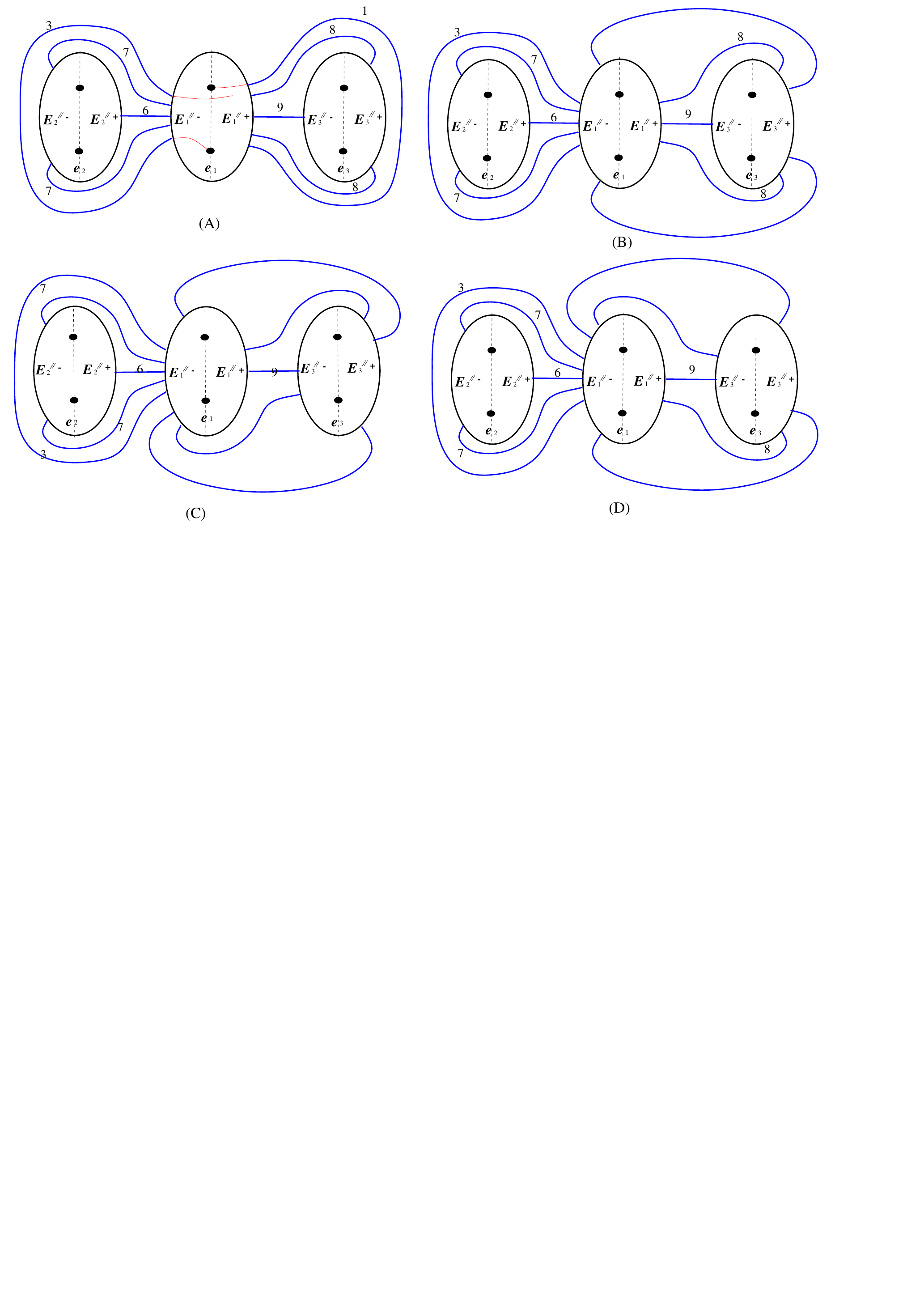}
 \vskip -450pt
 \caption{}\label{F11}
 
 \end{figure}
For the rest of the cases, we apply a multiple of the homeomorphism $(\delta_1^{-1}
 \delta_2)^{\pm 1}$ to have a simpler diagram as follows. For the diagram $(B)$, we apply a multiple of the homeomorphism $(\delta_1^{-1}
 \delta_2)$ to have the rightmost diagram in Figure~\ref{F15}.  We note that the homeomorphism $(\delta_1^{-1}
 \delta_2)$ preserves the density.
 \begin{figure}[htb]
 \includegraphics[scale=.9]{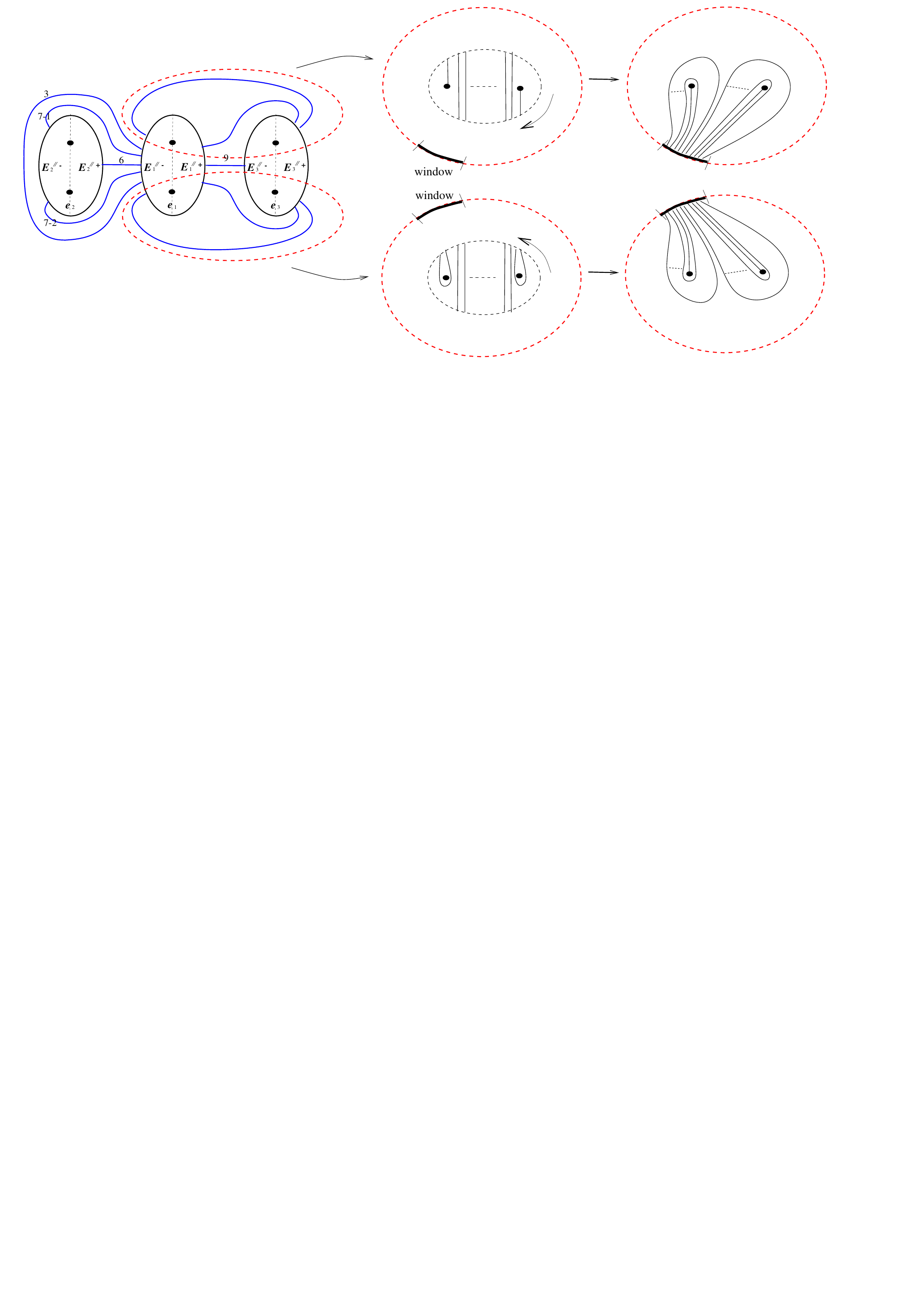}
 \vskip -540pt
 \caption{}\label{F17}
 
 \end{figure}

 \begin{figure}[htb]
 \includegraphics[scale=.9]{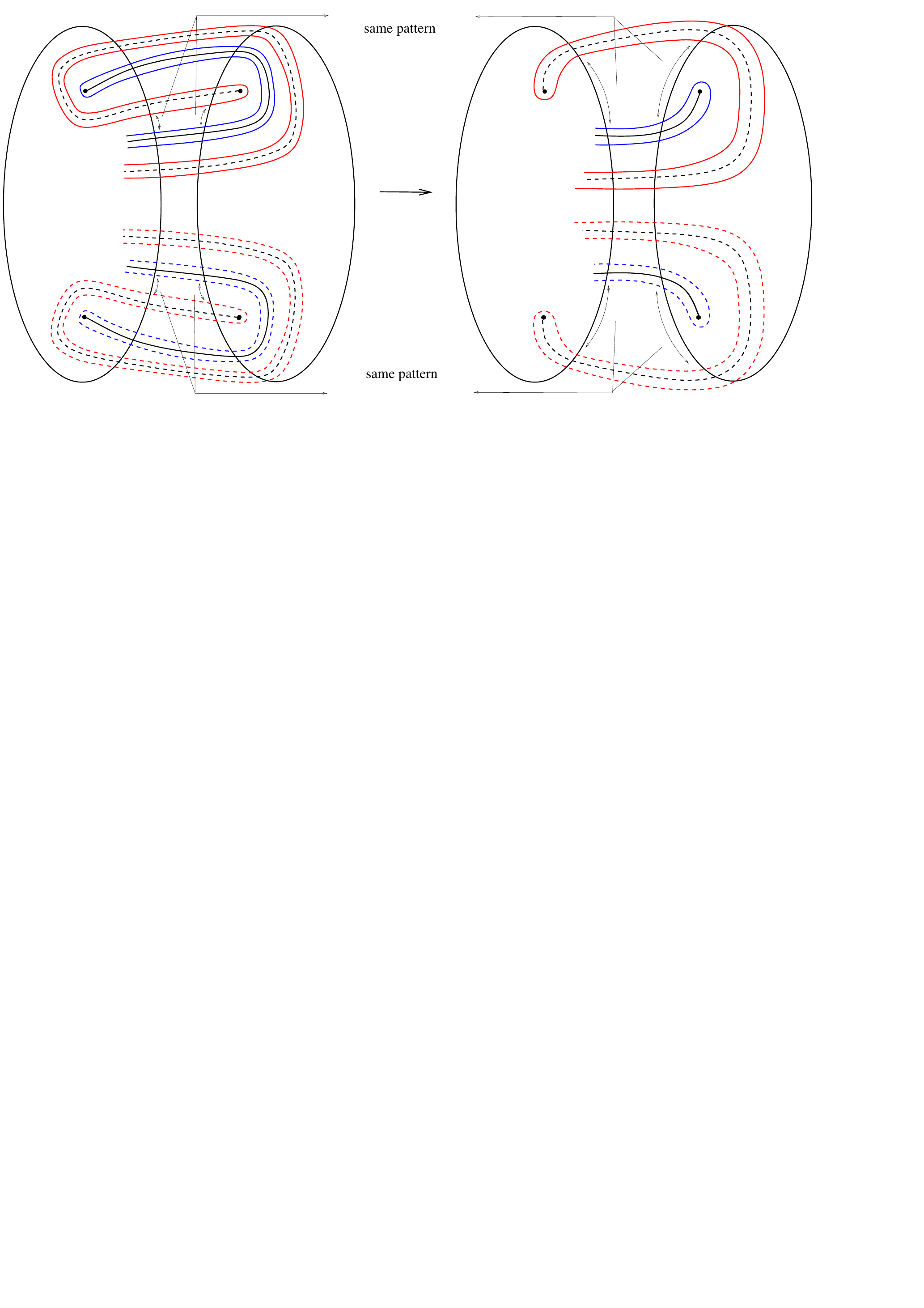}
 \vskip -520pt
 \caption{}\label{F13}
 
 \end{figure} 
  
 \begin{figure}[htb]
 \includegraphics[scale=.9]{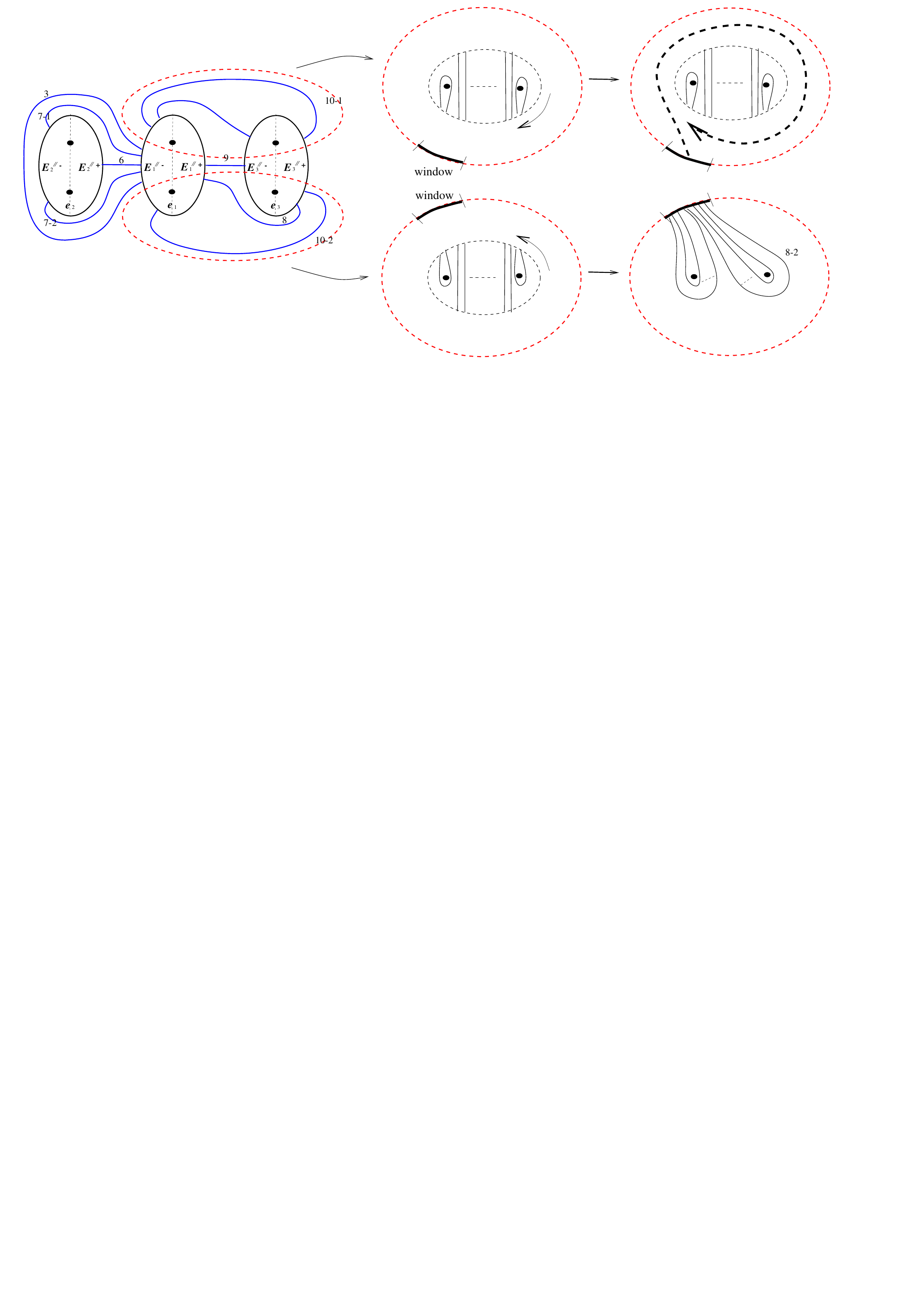}
 \vskip -550pt
 \caption{}\label{F12}
 
 \end{figure}
  
 \begin{figure}[htb]
 \includegraphics[scale=.9]{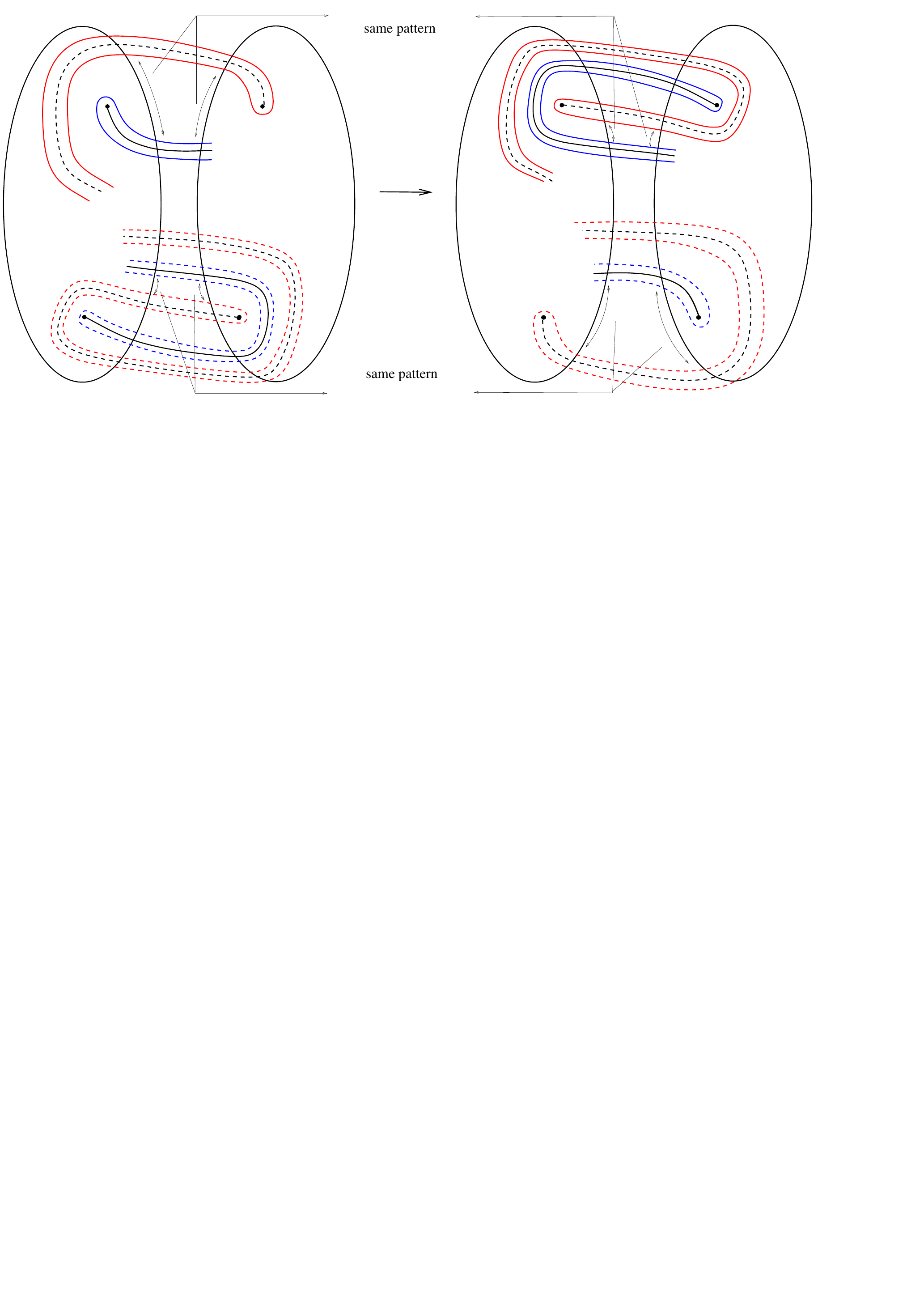}
 \vskip -520pt
 \caption{}\label{F14}
 
 \end{figure}

 Now, consider the cases $(C)$ and $(D)$. Since the cases are symmetric, it is enough to figure out the case $(D)$.  We apply the homeomorphism $(\delta_1^{-1}\delta_2)$ to the diagram $(D)$ until we get the rightmost diagrams   in Figure~\ref{F12}.  We  note that the homeomorphism $(\delta_1^{-1}
 \delta_2)$ preserves the density as in Figure~\ref{F14}. If we have the diagram having the rightmost two diagrams, the diagram has the type $2$. We repeat the procedure to remove the type $2$. We note that the procedure preserves the density. Since the total weights of the subarcs in $P'$ is finite and the procedures mentioned in this proof reduce the total weights, we eventually get a collection of bridge arcs so that at least one of them is isotopic to the straight arc connecting the two punctures in $E_i''$ for some $i\in\{1,2,3\}$. So, it is enough to consider  the diagrams right before the final diagram. Then we have the two diagrams $(a)$ and $(b)$ as in Figure~\ref{F18} if we ignore symmetric cases. We note that if we apply $(\delta_1^{-1}
 \delta_2)$ to the diagram $(a)$ then we have the bridge arc which connects the two punctures in $E_3''$. Also, we apply $\delta_3^{-1}$ to the diagram $(b)$ then we have the bridge arc which connects the two punctures in $E_1''$.
In $(a)$, assume that $\beta_1$ is isotopic to the red arc connecting the two punctures in $E_1''$. Since $\beta_2$ and $\beta_3$ also have a wave, it is clear that there are two adjacent endpoints in $\partial E_2''$ which belong to the same $\beta_j$ for some $j=2,3$. Similarly, in $(b)$, there are two adjacent endpoints in $\partial E_2''$ which belong to the same $\beta_j$ for some $j=2,3$. This implies that either $\delta_3^{\pm1}$ or 
  $(\delta_1^{-1}
 \delta_2)^{\pm 1}$ cannot preserve the density. This violates the argument above.
\begin{figure}[htb]
\includegraphics[scale=.9]{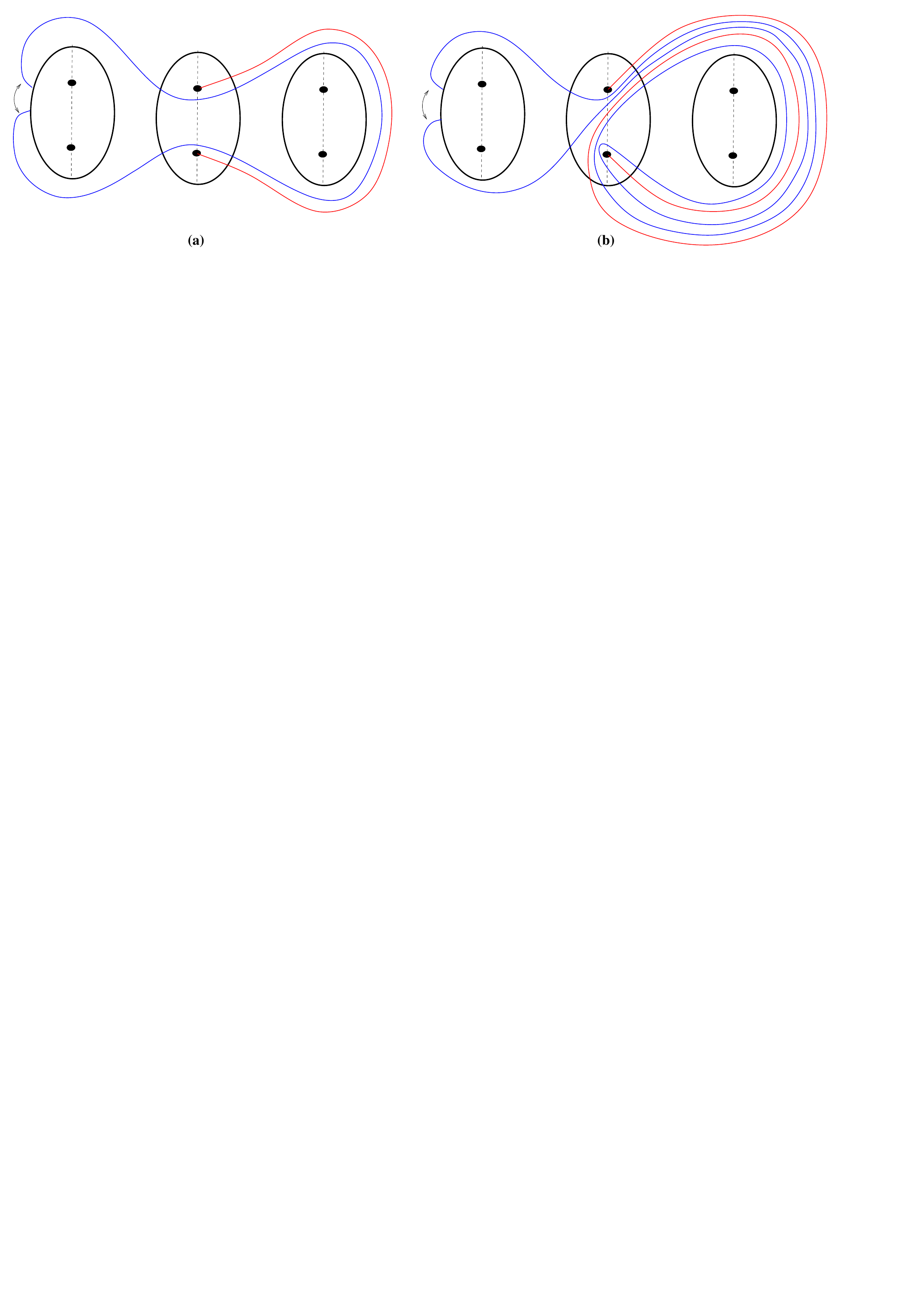}
\vskip -600pt
\caption{}\label{F18}
\end{figure}
 
 \end{proof}
 Now, we can complete the proof of Theorem~\ref{T11}.
\begin{proof}[Proof of the last case]
By Lemma~\ref{L2}, it is not possible to have a collection of disjoint three bridge arcs so that at least one of them has a wave. Therefore, if $\beta$ is dense in $P'$ then none of $\beta_i$ has a wave. This implies that the collection of the bridge arcs should be isotopic to the collection of $\partial E_1,\partial E_2$ and $\partial E_3$.
\end{proof}

 \section*{Acknowledgements}

 This research was supported by Basic Science Research Program through the National Research Foundation of Korea (NRF)  funded by the the Ministry of Education (NRF-2020R1I1A1A01052279 and NRF-2020R1A2C1A01009184)

\end{document}